\newcommand*\circled[1]{\tikz[baseline=(char.base)]{
    \node[shape=circle,draw,inner sep=2pt] (char) {#1};}}
\def\F{\mathbb{F}}
\def\R{\mathbb{R}}
\def\C{\mathbb{C}}
\DeclareMathOperator{\trace}{tr}
\DeclareMathOperator{\vvec}{vec}
\DeclareMathOperator{\grad}{grad}
\DeclareMathOperator{\sk}{skew}
\DeclarePairedDelimiter{\norm}{\lVert}{\rVert}
\DeclarePairedDelimiter{\abs}{\lvert}{\rvert}
\spnewtheorem{theorem}{Theorem}[section]{\bfseries}{\itshape}
\spnewtheorem{lemma}[theorem]{Lemma}{\bfseries}{\itshape}
\spnewtheorem{definition}[theorem]{Definition}{\bfseries}{\itshape}
\spnewtheorem{remark}[theorem]{Remark}{\bfseries}{\upshape}
\spnewtheorem{example}[theorem]{Example}{\bfseries}{\upshape}
\spnewtheorem{corollary}[theorem]{Corollary}{\bfseries}{\itshape}
\spnewtheorem{proposition}[theorem]{Proposition}{\bfseries}{\itshape}
\spnewtheorem{algorithm}[theorem]{Algorithm}{\bfseries}{\upshape}
\numberwithin{theorem}{section}
\begin{document}

\title{Nearest $\Omega$-stable matrix via Riemannian optimization
}


\author{Vanni Noferini \and Federico Poloni}


\institute{V. Noferini \at
              Aalto University, Department of Mathematics and Systems Analysis, P.O. Box 11100, FI-00076 Aalto, Finland \\
              \email{vanni.noferini@aalto.fi}           
           \and
           F. Poloni \at
             Universit\`{a} di Pisa, Dipartimento di Informatica, Largo Bruno Pontecorvo 3, 56127 Pisa, Italy\\
             \email{federico.poloni@unipi.it}
}

\date{Received: date / Accepted: date}

\maketitle

\begin{abstract}
We study the problem of finding the nearest $\Omega$-stable matrix to a certain matrix $A$, i.e., the nearest matrix with all its eigenvalues in a prescribed closed set $\Omega$. Distances are measured in the Frobenius norm. An important special case is finding the nearest Hurwitz or Schur stable matrix, which has applications in systems theory. We describe a reformulation of the task as an optimization problem on the Riemannian manifold of orthogonal (or unitary) matrices. The problem can then be solved using standard methods from the theory of Riemannian optimization. The resulting algorithm is remarkably fast on small-scale and medium-scale matrices, and returns directly a Schur factorization of the minimizer, sidestepping the numerical difficulties associated with eigenvalues with high multiplicity.
\end{abstract}

\section{Introduction}
Let $\Omega$ be a non-empty closed subset of $\C$, and define
\begin{equation} \label{eq:feasible}   S(\Omega, n, \F) :=   \{  X \in \F^{n \times n}  : \Lambda(X) \subseteq \Omega \}  \subseteq \F^{n \times n}, \end{equation}
the set of $n \times n$ matrices (with entries in either $\F=\C$ or $\F=\R$) whose  eigenvalues all belong to $\Omega$. (Here $\Lambda(X)$ denotes the spectrum of the square matrix $X$.)

Given $A \in \F^{n \times n}$, we consider the problem of finding a matrix $B \in S(\Omega, n, \F)$ nearest to $A$, as well as the distance from $A$ to $B$. This is known as the \emph{nearest $\Omega$-stable matrix problem}. More formally, the problem is to find
\begin{equation} \label{minproblem}
	B = \arg \min_{X \in S(\Omega, n, \F)} \norm{A-X}_F^2
\end{equation}
together with the value of the minimum. The norm considered here is the Frobenius norm $\norm{M}_F := (\sum_{i,j=1}^n \abs{M_{ij}}^2)^{1/2}=\sqrt{\trace(M^*M)}$. 

Important examples of nearest $\Omega$-stable matrix problems include:
\begin{itemize}
\item the nearest Hurwitz stable matrix problem \footnote{In the control theory literature, a matrix satisfying our definition of Hurwitz stability is often referred to as Hurwitz \emph{semistable}, whereas the adjective stable is reserved for matrices in the interior of $S(\Omega_H, n, \F)$, i.e., matrices whose eigenvalues have strictly negative real part. However, open sets cannot possibly contain a nearest matrix to a given matrix $A$. Thus, given the context of this paper, we prefer for simplicity to simply define stable matrices as the matrices whose eigenvalues have nonpositive real part. Similar comments apply to other choices of $\Omega$.} ($\Omega = \Omega_H :=  \{  z \in \C : \Re z \leq 0   \}$), which arises in control theory \cite[Section 7.6]{Datta}. Here, $\Re z$ denotes the real part of $z\in\mathbb{C}$. Hurwitz stability is related to asymptotical stability of the dynamical system $\dot{x} = Ax$;
in some cases, numerical or modelling errors produce an unstable system in lieu of a stable one, and it is desirable to find a way to `correct' it to a stable one without modifying its entries too much. 
\item the nearest Schur stable matrix problem ($\Omega = \Omega_S := \{  z \in \C: |z| \leq 1   \}$), which is the direct analogue of the Hurwitz stable matrix problem that arises when, instead of a continuous-time, one has a discrete-time dynamical system $x_{k+1} = Ax_k$ \cite{Datta}.
\item the problem of finding nearest matrix with all real eigenvalues ($\Omega = \R$), which was posed (possibly more as a mathematical curiosity than for an engineering-driven application) as an open question in some internet mathematical communities~\cite{F17,Gary}.
\end{itemize}
Moreover, in the literature interest has been given also to more exotic choices. For instance, in \cite{CGS19} this problem is considered for the case were $\Omega$ is a region generated by the intersection of lines and circles.

Another common related (but different!) problem is that of finding the nearest $\Omega$-\emph{unstable} matrix, i.e., given a matrix $A$ with all its eigenvalues in the interior of a closed set $\Omega$, finding the nearest matrix with \emph{at least} one eigenvalue outside the interior of $\Omega$, together with its distance from $A$. For $\Omega=\Omega_H$ (resp. $\Omega = \Omega_S$), this minimal distance (known as \emph{stability radius}) is related to the \emph{pseudospectral abscissa} (resp. \emph{pseudospectral radius}), respectively, and has been studied extensively; see for instance~\cite{BenM,BurLO,B88,GM15,GugO,HeW,HinP,KosMS,KreB,Men,MenO}. Other variants and extensions have been studied recently as well~\cite{GilKS19,GilKS20,GMS18,GugP18,NesP20}.

Nearest $\Omega$-stable matrix problems are notoriously hard. Unlike various other matrix nearness problems~\cite{Higham89matrixnearness}, and to our knowledge, there is no closed-form solution based on an eigenvalue or Schur decomposition. At least for the choices of $\Omega$ illustrated above, the feasible region $S(\Omega, n, \F)$ is nonconvex. As a result, currently existing algorithms cannot guarantee that a global minimizer is found, and are often caught in local minima. In the most popular case of the nearest Hurwitz stable matrix problem, many approaches have been proposed, including the following.
\begin{itemize}
	\item Orbandexivry, Nesterov and Van Dooren~\cite{ONV13} use a method based on successive projections on convex approximations of the feasible region $S(\Omega, n, \F)$.
	\item Curtis, Mitchell and Overton~\cite{CurMO}, improving on a previous method by Lewis and Overton~\cite{LewO}, use a BFGS method for non-smooth problems, applying it directly to the largest real part among all eigenvalues.
	\item Choudhary, Gillis and Sharma~\cite{CGS19,GS17} use a reformulation using dissipative Hamiltonian systems, paired with various optimization methods, including semidefinite programming.
\end{itemize}

In this paper, we consider the problem for a completely general $\Omega$ and we describe a novel approach: we parametrize $X$ with its complex Schur factorization $X=UTU^*$, and observe that, if $U$ is fixed, then there is an easy solution to the simplified problem in the variable $T$ only. As a remarkable consequence, we demonstrate that finding an $\Omega$-stable matrix nearest to $A$ is equivalent to minimizing a certain function (depending both on $\Omega$ and on $A$) over the matrix Riemannian manifold of unitary matrices $U(n)$. A version of the method employing only real arithmetic (and minimizing over the manifold of orthogonal matrices $O(n)$) can be developed with some additional considerations.

This reformulation of the problem is still difficult. Indeed, local minima of the original problem are also mapped to local minima of the equivalent Riemannian optimization problem. As a result, we cannot guarantee a global optimum either. However, there are several advantages coming from the new point of view:

\begin{enumerate}
\item The approach is completely general, and unlike some previously known algorithms it works for any closed set $\Omega$;
\item It can exploit the robust existing machinery of general algorithms for optimization over Riemannian manifolds~\cite{AMS08,BMAS14};
\item In the most popular practical case of Hurwitz stability, numerical experiments show that our algorithm performs significantly better than state-of-the-art existing algorithms, both in terms of speed and in terms of accuracy, measured as the distance of the (possibly not global) minimum found from the matrix $A$;
\item Our algorithm first finds an optimal unitary/orthogonal matrix $Q$, then performs (implicitly) a unitary/orthogonal similarity $A \mapsto Q^*A Q$, and finally projects in a simple way onto $S(\Omega, n, \F)$: the simplicity of this procedure ensures backward stability of the computation of $B$.
\item The algorithm produces as an output directly a Schur decomposition $B = QTQ^*$ (or a close analogue) of the minimizer. This decomposition is useful in applications, and is a `certificate' of stability. Chances are that the problem of computing the Schur decomposition \emph{a posteriori} given $B$ is a highly ill-conditioned one, since in many cases the minimizer $B$ has multiple eigenvalues with high multiplicity. Hence, it is convenient to have this decomposition directly available.
\end{enumerate}

The paper is structured as follows. After some preliminaries concerning the distance from closed subsets of $\R^N$ (Section \ref{sec:distance}), in Section \ref{sec:complexsetup} we describe the theory underlying our method for complex matrices, while in Section \ref{sec:realsetup} we set up the slightly more involved theoretical background for real matrices. Section~\ref{sec:real2by2} deals with the $2\times 2$ case of the problem: we compute a closed-form solution for the cases $\Omega_H$ and $\Omega_S$. Section \ref{sec:gradient} describes the details of how to compute the gradient of the objective function in our reformulation of the problem, which is key for devising a practical algorithm. In Section \ref{sec:numexp}, we describe the results of some numerical experiments, comparing (with rather promising outcome) with existing methods. We finally draw some conclusions in Section \ref{sec:conclusions}.

The code that we used for our numerical experiments is made publicly available from the repository \url{https://github.com/fph/nearest-omega-stable}.

\section{Squared distance from closed sets}\label{sec:distance}

Let $\Omega \subseteq \mathbb{C}$ be a non-empty closed set, and define
\begin{align*}
p_\Omega &: \mathbb{C} \to \Omega, &  p_\Omega(z)& = \arg \min_{x\in \Omega} \abs{z-x}^2,\\
d^2_\Omega &: \mathbb{C} \to \mathbb{R}, & d^2_\Omega(z) &= \min_{x\in \Omega} \abs{z-x}^2.
\end{align*}
Note that $p_\Omega$ is not always uniquely defined: for instance, take $\Omega = \{0,2\} \subset \C$ and let $z$ be any point with $\Re z = 1$. In many practical cases the minimum is always achieved in a unique point (for instance, when $\Omega$ is convex), but in general there may be a non-empty set of points $z$ for which the minimizer is non-unique, known as \emph{medial axis} of $\Omega$. 

The same definitions can be formulated for $\mathbb{R}^N$, and indeed the definitions on $\mathbb{C}$ are just a special case of the following ones, thanks to the isomorphism $\mathbb{C} \simeq \mathbb{R}^2$. Let now $\Omega \subseteq \mathbb{R}^N$ be a non-empty closed set, and define
\begin{subequations} \label{distRN}
\begin{align}
p_\Omega &: \mathbb{R}^N \to \Omega, &  p_\Omega(z)& = \arg \min_{x\in \Omega} \norm{z-x}^2,\\
d^2_\Omega &: \mathbb{R}^N \to \mathbb{R}, & d^2_\Omega(z) &= \min_{x\in \Omega} \norm{z-x}^2.
\end{align}
\end{subequations}

The following result proves differentiability of the squared distance function $d_\Omega^2$: it is stated in \cite[Proposition~4.1]{chazal} for a compact $\Omega \subseteq \mathbb{R}^N$, but it is not difficult to see that it holds also for unbounded closed sets, since it is a local property.
\begin{theorem} \label{thm:distance-function}
The squared distance function $d_\Omega^2(z)$ is continuous on $\mathbb{R}^N$ (or $\mathbb{C}$), and almost everywhere differentiable. Indeed, it is differentiable on the complement of the medial axis of $\Omega$. Its gradient is $\nabla_z d_\Omega^2(z) = 2(z-p_\Omega(z))$.
\end{theorem}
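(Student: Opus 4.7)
The plan is to first dispatch continuity, then tackle differentiability. Continuity of $d_\Omega^2$ follows immediately from the fact that $d_\Omega$ itself is $1$-Lipschitz: for any $z,w \in \R^N$ and any $x \in \Omega$, the triangle inequality $\norm{z-x} \leq \norm{z-w} + \norm{w-x}$ yields $d_\Omega(z) \leq \norm{z-w} + d_\Omega(w)$, whence $\abs{d_\Omega(z) - d_\Omega(w)} \leq \norm{z-w}$, and hence $d_\Omega^2$ is locally Lipschitz. The bulk of the work is the differentiability claim and the explicit formula for the gradient.

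For that, I would use the following identity, which rewrites $d_\Omega^2$ as the difference of a smooth convex function and a convex function:
\begin{equation*}
d_\Omega^2(z) = \min_{x\in\Omega}\bigl(\norm{z}^2 - 2\langle z,x\rangle + \norm{x}^2\bigr) = \norm{z}^2 - g(z), \qquad g(z) := \sup_{x\in\Omega}\bigl(2\langle z,x\rangle - \norm{x}^2\bigr).
\end{equation*}
The key observation is that $g$, being a pointwise supremum of affine functions of $z$, is convex and lower semi-continuous. By the classical Danskin-type formula for the subdifferential of a supremum of affine functions, $\partial g(z)$ equals the closed convex hull of $\{2x : x \in \Omega \text{ achieves the supremum defining } g(z)\}$. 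A direct algebraic manipulation shows that the achieving set is precisely $p_\Omega(z) = \arg\min_{x\in\Omega}\norm{z-x}^2$ (as a generally set-valued object). It follows that $g$ is differentiable at $z$ if and only if $\partial g(z)$ is a singleton, if and only if $p_\Omega(z)$ is single-valued, i.e., if and only if $z$ lies outside the medial axis. When this holds, $\nabla g(z) = 2 p_\Omega(z)$, and therefore $\nabla d_\Omega^2(z) = 2z - 2 p_\Omega(z) = 2(z - p_\Omega(z))$. Almost-everywhere differentiability then follows from the general fact that a finite-valued convex function on $\R^N$ is Fr\'echet differentiable outside a Lebesgue-null set.

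The main technical points to handle carefully are: \emph{(i)} the possible unboundedness of $\Omega$, so that the minimum and the supremum above need not a priori be attained --- this is circumvented by observing that, for each fixed $z$, the optimization may be restricted to the compact set $\Omega \cap \{x : \norm{x-z} \leq R\}$ for any $R$ strictly larger than $d_\Omega(z)$, so $p_\Omega(z)$ is always non-empty; and \emph{(ii)} invoking the Danskin-type subdifferential formula in this setting. An elementary alternative that avoids convex analysis is a direct squeeze argument at a point $z$ outside the medial axis: the upper bound $d_\Omega^2(z+h) \leq \norm{(z+h) - p_\Omega(z)}^2 = d_\Omega^2(z) + 2\langle h, z - p_\Omega(z)\rangle + \norm{h}^2$ (obtained by using $x = p_\Omega(z)$ in the definition), together with the matching lower bound from expanding $d_\Omega^2(z+h) = \norm{(z+h) - p_\Omega(z+h)}^2$ and invoking the continuity of $p_\Omega$ at $z$ (which follows from its single-valuedness there via a compactness-and-uniqueness argument), pins down $\nabla d_\Omega^2(z) = 2(z - p_\Omega(z))$ directly. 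I expect the genuine obstacle to be this latter continuity of $p_\Omega$ at points outside the medial axis; it is intuitive but requires a careful separation-of-cases proof.
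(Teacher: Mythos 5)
The paper offers no proof of this theorem: it simply cites \cite[Proposition~4.1]{chazal}, which treats compact $\Omega$, and remarks in a single sentence that the extension to unbounded closed $\Omega$ is easy because differentiability is a local property. Your proposal, by contrast, is a genuine from-scratch argument, and it is correct. The central identity $d_\Omega^2(z) = \norm{z}^2 - g(z)$ with $g(z) = \sup_{x\in\Omega}(2\langle z,x\rangle - \norm{x}^2)$ is the classical semiconcavity of the squared distance (equivalently, the Asplund function of $\Omega$ is convex), and the subdifferential computation $\partial g(z) = \overline{\mathrm{conv}}\{2x : x \in p_\Omega(z)\}$ is the standard Valadier/Ioffe--Tikhomirov route. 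Two technical points you flagged do deserve the care you gave them: the unboundedness of $\Omega$ is handled correctly by your observation that $2\langle z,x\rangle - \norm{x}^2 = \norm{z}^2 - \norm{z-x}^2$, so the supremum is always attained on the compact set $\Omega\cap \bar B(z,R)$ with $R>d_\Omega(z)$, and $g$ locally coincides with a supremum over that compact index set, so the Valadier formula applies; and the implication ``$\partial g(z)$ singleton $\Leftrightarrow$ differentiable'' is the standard fact for finite convex functions on $\R^N$. Your alternative squeeze argument is also sound, with the only non-trivial step being the sequential continuity of any selection of $p_\Omega$ at non-medial-axis points, which you correctly identify and which follows from boundedness of minimizing sequences plus uniqueness of the limit. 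One small clarification worth making explicit: the almost-everywhere claim is really a corollary of the medial-axis claim, because $g$ being finite convex forces its non-differentiability set (which you identify with the medial axis) to be Lebesgue-null; stating that chain of implications in that order would tighten the argument. In short, your proof is valid and more self-contained than the paper's citation-only treatment.
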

In this paper, we will mostly be concerned in cases in which the set $\Omega$ is convex, so the medial axis is empty, but most of the framework still works even in more general cases. When the medial axis is not empty, we henceforth tacitly assume that, for all $z$ belonging to the medial axis, $p_\Omega(z)$ has been fixed by picking one of the possible minimizers (if necessary, via the axiom of choice).

\section{Reformulating the problem: the complex case} \label{sec:complexsetup}

We start with a lemma that shows how to solve~\eqref{minproblem} with the additional restriction that $X$ is upper triangular.

\begin{lemma} \label{lem:aux}
Let $\hat{A}\in\mathbb{C}^{n\times n}$ be given. Then, a solution of
\begin{equation} \label{auxmin}
	\mathcal{T}(\hat{A}) = \arg \min_{\substack{T \in S(\Omega,n,\C) \\  \text{$T$ upper triangular}}} \norm{\hat{A} - T}_F^2	
\end{equation}
is given by 
\begin{equation} \label{T}
\mathcal{T}(\hat{A})_{ij} = \left\{\begin{array}{lll}
    \hat{A}_{ij} & i<j & \text{(upper triangular part)},\\
    p_\Omega(\hat{A}_{ii}) & i=j & \text{(diagonal part)},\\
    0 & i > j & \text{(lower triangular part)}.\\
\end{array}	\right.
\end{equation}
\end{lemma}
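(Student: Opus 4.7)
The plan is to exploit the fact that the constraints decouple entry-by-entry once we restrict to upper triangular matrices. The key observation is that an upper triangular matrix has its eigenvalues precisely on the diagonal; hence, the feasibility condition $T \in S_{\Omega,n,\C}$ combined with upper-triangularity is equivalent to the following pointwise constraints: $T_{ij} = 0$ for $i>j$ (from upper-triangularity), $T_{ii} \in \Omega$ for $i=1,\dots,n$ (from the spectral constraint), and no constraint whatsoever on the strict upper triangle $T_{ij}$ for $i<j$.

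Once these constraints are laid out, I would split the objective using the entrywise definition of the Frobenius norm:
\begin{equation*}
\norm{\hat{A} - T}_F^2 = \sum_{i<j} \abs{\hat{A}_{ij} - T_{ij}}^2 + \sum_{i=1}^n \abs{\hat{A}_{ii} - T_{ii}}^2 + \sum_{i>j} \abs{\hat{A}_{ij} - T_{ij}}^2.
\end{equation*}
Since the constraints on the different entries of $T$ are independent, each of the three sums can be minimized separately. The first sum is minimized to $0$ by matching $T_{ij} = \hat{A}_{ij}$ for $i<j$; the third sum is forced (since $T_{ij}=0$ is the only admissible value for $i>j$); and the second sum, which decouples further into $n$ scalar subproblems $\min_{T_{ii} \in \Omega} \abs{\hat{A}_{ii} - T_{ii}}^2$, is minimized by setting $T_{ii} = p_\Omega(\hat{A}_{ii})$ by the very definition of $p_\Omega$ in Section \ref{sec:distance}.

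Assembling these three minimizers gives exactly the formula \eqref{T}, proving that $\mathcal{T}(\hat{A})$ solves the constrained minimization problem~\eqref{auxmin}. There is no serious obstacle here: the entire argument rests on the trivial-but-crucial fact that, for upper triangular matrices, the spectrum is read off the diagonal, which in turn causes the feasible set to become a Cartesian product of pointwise constraints over the entries of $T$. The minimizer is unique on the strict upper and lower triangles, but may fail to be unique on the diagonal whenever some $\hat{A}_{ii}$ lies on the medial axis of $\Omega$, which explains why the lemma gives only \emph{a} solution rather than \emph{the} solution; this is consistent with the convention adopted after Theorem~\ref{thm:distance-function}.
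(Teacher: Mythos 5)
Your proof is correct and takes essentially the same approach as the paper: both split $\norm{\hat{A}-T}_F^2$ into the strict lower, diagonal, and strict upper contributions and minimize each independently, with the lower part being a constant, the diagonal handled by $p_\Omega$, and the strict upper part matched exactly. Your added remarks on why the constraints decouple (upper-triangularity puts the eigenvalues on the diagonal) and on the non-uniqueness on the medial axis are correct and consistent with the paper's surrounding discussion.
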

\begin{proof}
We have
\[
\norm{\hat{A} - T}_F^2 = \underbrace{\sum_{i>j} \abs{\hat{A}_{ij}}^2}_{\circled{1}} + \underbrace{\sum_{i} \abs{\hat{A}_{ii}- T_{ii}}^2}_{\circled{2}} + \underbrace{\sum_{i<j} \abs{\hat{A}_{ij}- T_{ij}}^2}_{\circled{3}}.
\]
Clearly, \circled{1} is constant, the minimum of~\circled{2} under the constraint that $T_{ii} \in \Omega$ is achieved when $T_{ii} = p_{\Omega}(\hat{A}_{ii})$,
and the minimum of~\circled{3} is achieved when $T_{ij}=\hat{A}_{ij}$.
\end{proof}

In particular, it holds that 
\[
\min_{\substack{T \in S(\Omega,n,\C) \\  \text{$T$ upper triangular}}} \norm{\hat{A} - T}_F^2 = \norm{\hat{A} - \mathcal{T}(\hat{A})}_F^2 = \norm{\mathcal{L}(\hat{A})}_F^2,
\]
with $\mathcal{L}(\hat{A}) = \hat{A} - \mathcal{T}(\hat{A})$. One can see that the matrix $\mathcal{L}(\hat{A})$ is lower triangular, and has entries
\begin{equation} \label{L}
\mathcal{L}(\hat{A})_{ij} =
 \begin{cases}
    0 & i<j,\\
    \hat{A}_{ii} - p_\Omega(\hat{A}_{ii}) & i=j,\\
    \hat{A}_{ij} & i > j.
\end{cases} 
\end{equation}

Note that the matrices $\mathcal{T}(\hat{A})$ and $\mathcal{L}(\hat{A})$ are uniquely defined if and only if $p_\Omega(\hat{A}_{ii})$ is unique for each $i$. However, the quantity $\norm{\mathcal{L}(\hat{A})}_F^2$, which is the optimum of~\eqref{auxmin}, is always uniquely determined, since $\abs{\hat{A}_{ii} - p_\Omega(\hat{A}_{ii})}^2 = d_\Omega^2(\hat{A}_{ii})$ is uniquely determined.

Thanks to Lemma~\ref{lem:aux}, we can convert the nearest $\Omega$-stable matrix problem~\eqref{minproblem} into an optimization problem over the manifold of unitary matrices
\begin{equation} \label{minriemann2}
	\min_{U \in U(n)} \norm{\mathcal{L}(U^*AU)}^2_F.
\end{equation}
Indeed, Theorem \ref{thm:equivalence} below shows the equivalence of \eqref{minproblem} and \eqref{minriemann2}.

\begin{theorem} \phantom{A}\label{thm:equivalence}
\begin{enumerate}
	\item The optimization problems~\eqref{minproblem} and~\eqref{minriemann2} have the same minimum value.
	\item If $U$ is a local (resp. global) minimizer for~\eqref{minriemann2}, then $B = UTU^*$, where $T = \mathcal{T}(U^*AU)$, is a local (resp. global) minimizer for~\eqref{minproblem}.
	\item If $B$ is a local (resp. global) minimizer for~\eqref{minproblem}, and $B=UTU^*$ is a Schur decomposition, then $U$ is a local (resp. global) minimizer for~\eqref{minriemann2} and $T = \mathcal{T}(U^*AU)$.
\end{enumerate}
\end{theorem}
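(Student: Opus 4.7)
The plan is built around the identity $g(V) = f(\pi(V))$ relating the Riemannian objective $g(V) := \|\mathcal{L}(V^*AV)\|_F^2$ to the original objective $f(X) := \|A-X\|_F^2$, via the map $\pi(V) := V\,\mathcal{T}(V^*AV)\,V^*$. Unitary invariance of the Frobenius norm immediately yields $f(\pi(V)) = \|V^*AV - \mathcal{T}(V^*AV)\|_F^2 = g(V)$, and $\pi(V) \in S_{\Omega,n,\C}$ because its eigenvalues are the diagonal entries of $\mathcal{T}(V^*AV)$, which lie in $\Omega$ by construction. The companion inequality is Lemma~\ref{lem:aux} applied to $\hat{A} = V^*AV$: for any Schur decomposition $X = VTV^*$ of $X \in S_{\Omega,n,\C}$, one has $f(X) = \|V^*AV - T\|_F^2 \geq \|\mathcal{L}(V^*AV)\|_F^2 = g(V)$, with equality if and only if $T = \mathcal{T}(V^*AV)$.

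Part 1 is immediate from these two observations, since every $V \in U(n)$ sits above $\pi(V) \in S_{\Omega,n,\C}$ with the same objective value, and every feasible $X$ lifts to some $V$ with $g(V) \leq f(X)$. The global cases of Parts 2 and 3 then follow by chasing equalities in these inequalities at a respective minimizer; the equality case in Lemma~\ref{lem:aux} simultaneously pins down $T = \mathcal{T}(U^*AU)$ in Part 3.

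Part 3 for local minimizers I would tackle in two steps. First, I would argue that a local minimizer $B = UTU^*$ of $f$ must already satisfy $T = \mathcal{T}(U^*AU)$: otherwise the interpolation $T_\varepsilon := (1-\varepsilon)T + \varepsilon\,\mathcal{T}(U^*AU)$ remains upper triangular with diagonal in $\Omega$ for $\varepsilon > 0$ small (immediate when $\Omega$ is convex, which is the case of primary interest in the paper; in general one perturbs only the diagonal entries that are not already at their $p_\Omega$ value) and strictly decreases $f$, contradicting local minimality. Once $\pi(U) = B$ is secured, the continuity of $\pi$ transports local minimality back to $U$: for $V$ close to $U$, $\pi(V)$ is close to $B$, so $g(V) = f(\pi(V)) \geq f(B) = g(U)$.

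Part 2 for local minimizers is where I expect the main obstacle. The natural strategy is by contradiction: if $B = \pi(U)$ is not a local minimizer of $f$, pick $X_k \to B$ in $S_{\Omega,n,\C}$ with $f(X_k) < f(B)$, take Schur decompositions $X_k = V_k T_k V_k^*$, and deduce $g(V_k) \leq f(X_k) < g(U)$. By compactness of $U(n)$ a subsequence satisfies $V_k \to V_*$, and $V_*$ is itself a Schur factorization of $B$. The subtle point is that $V_*$ need not equal $U$: $B$ may have repeated eigenvalues, and the ordering of eigenvalues on the diagonal of the $T_k$ need not match that of $\mathcal{T}(U^*AU)$, so the $V_k$ may never enter the prescribed neighborhood of $U$ witnessing local minimality of $g$. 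To close this gap I would exploit two gauge freedoms in selecting Schur decompositions: right multiplication of $V$ by a diagonal unitary $\Lambda$, under which $g(V\Lambda) = g(V)$ because the strictly lower-triangular entries of $V^*AV$ get rescaled only by unit-modulus factors while the diagonal is unchanged; and Schur-form reordering that permutes the diagonal of $T_k$ to match the ordering of $\mathcal{T}(U^*AU)$. Combining these, I expect to be able to redefine the $V_k$ so that $V_k \to U$, producing the contradiction with local minimality of $g$ at $U$.
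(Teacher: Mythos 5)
Your framework --- the map $\pi(V) = V\,\mathcal{T}(V^*AV)\,V^*$, the identity $g = f\circ\pi$, and the companion inequality from Lemma~\ref{lem:aux} --- is exactly the skeleton of the paper's argument, and your treatment of Part~1 and of the global cases of Parts~2 and~3 matches the paper's proof, which consists only of the three-line chain of equalities for the global minimum followed by ``All the statements follow from this equivalence.'' For the local claims the paper gives no detail, so there you are filling in a gap. Your Part~3 (local) argument is correct when $\Omega$ is convex; for non-convex $\Omega$ the segment from $T$ to $\mathcal{T}(U^*AU)$ can leave $S_{\Omega,n,\C}$, the proposed ``perturb only the offending diagonal entries'' fix has the same problem, and the claim $T=\mathcal{T}(U^*AU)$ can in fact fail outright (take $n=1$, $\Omega=\{0,2\}$, $A=1/2$: $B=2$ is isolated in $\Omega$, hence a local minimizer, yet $p_\Omega(A)=0$).

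The genuine gap is in Part~2 (local), and the two gauge freedoms you invoke do not close it. Right-multiplication by a diagonal unitary does leave $g$ invariant, but Schur reordering multiplies $V_k$ on the right by a unitary that is \emph{not} diagonal and therefore does not preserve $g$; the only thing that survives is $g(V_k')\leq f(X_k)$, which holds for any Schur unitary of $X_k$ by Lemma~\ref{lem:aux}. More fundamentally, even after reordering and rephasing, the limit $V_*$ is merely \emph{some} Schur unitary of $B$ with the prescribed eigenvalue order; when $B$ has a repeated eigenvalue this set is a manifold of positive dimension and $V_*$ need not lie near $U$. Since the paper stresses repeatedly (Example~\ref{example:generic}, the ``Multiple eigenvalues'' experiments, Figure~\ref{fig:eigenvalues}) that minimizers typically \emph{do} have eigenvalues of high multiplicity, this is not a negligible corner case. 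The tempting shortcut --- that $\hat X_k := U^*X_kU\to T$ upper triangular should admit Schur factorizations $\hat X_k = W_kS_kW_k^*$ with $W_k\to I$ --- is also false in general: take $T=0$ and $\hat X_k = \varepsilon_k Y$ for a fixed nilpotent $Y$ that is not triangular. So Part~2 (local) needs an idea that neither you nor the paper supply, and you are right to flag it as the main obstacle.
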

\begin{proof}
Taking a Schur form of the optimization matrix variable $X=UTU^*$, we have
\begin{align*}
\min_{X \in S(\Omega,n,\C)} \norm{A-X}_F^2 &= \min_{U \in U(n)} \min_{\substack{\text{$T$ upper triangular}\\ \Lambda(T) \subseteq \Omega }} \norm{A-UTU^*}_F^2 \\
& = \min_{U \in U(n)} \min_{\substack{\text{$T$ upper triangular}\\ \Lambda(T) \subseteq \Omega }} \norm{U^*AU-T}_F^2\\
& = \min_{U \in U(n)} \norm{\mathcal{L}(U^*AU)}_F^2,
\end{align*}
where the last step holds because of Lemma~\ref{lem:aux}. All the statements follow from this equivalence.
\end{proof}

In addition, we can restrict the optimization problem \eqref{minriemann2} to the special unitary group $SU(n)$. Indeed, given $U \in U(n)$, defining $D=\mathrm{diag}(1,\dots,1,\det(U))$, we have $UD^* \in SU(n)$ and
\[  \norm{\mathcal{L}(U^*A U)}_F^2 = \norm{\mathcal{L}(DU^* A U D^*)}_F^2.   \]

\section{Reformulating the problem: the real case} \label{sec:realsetup}

The version of our result for $\F=\R$ is somewhat more involved. We can identify two separate cases, one being a faithful analogue of the complex case and the other involving some further analysis.

\subsection{$\Omega \subseteq \R$}

The simpler version of the real case is when $\Omega$ is a subset of the reals. This allows one to solve, for instance, the problem of the nearest matrix with real eigenvalues, $\Omega=\mathbb{R}$. 

In this case, matrices $X \in S(\Omega,n,\mathbb{R})$ have a real Schur form in which the factor $T$ is upper triangular (as opposed to quasi-triangular with possible $2\times 2$ blocks). In this case, \emph{mutatis mutandis}, all the arguments in Section~\ref{sec:complexsetup} still hold. Specifically, it suffices to replace $\C$ with $\R$, $U(n)$ with $O(n)$, $SU(n)$ with $SO(n)$ (observe in passing that $SO(n)$ is connected while $O(n)$ is not) and there is nothing more to say.

\subsection{$\Omega \not\subseteq \R$}

In the more general case, matrices $X \in S(\Omega,n,\mathbb{R})$ may have complex eigenvalues, so their real Schur form will have a quasi-triangular $T$. Since the zero pattern of $T$ is not uniquely determined, we need to modify slightly the approach of Section~\ref{sec:complexsetup} to be able to define a suitable objective function $f(Q)$. We solve this issue by imposing a specific zero pattern for $T$.
\begin{definition}
A real matrix $T \in \R^{n \times n}$ is said to be in \emph{modified real Schur form} if it is block upper triangular with all $2 \times 2$ blocks on the diagonal, except (if and only if $n$ is odd) for a unique $1 \times 1$ block at the right bottom.

A modified real Schur decomposition of a matrix $M \in \R^{n \times n}$ is $M=Q T Q^\top $ where $Q \in \R^{n \times n}$ is orthogonal and $T$ is in modified real Schur form.
\end{definition}
Since one can reorder blocks in the real Schur decomposition~\cite[Theorem 2.3.4]{HornJohnson}, in particular one can obtain a real Schur decomposition in which all non-trivial $2\times 2$ diagonal blocks are on top. Thus, the existence of modified real Schur decompositions is an immediate corollary of the existence of real Schur decompositions. Further modified Schur decompositions, having more non-trivial $2 \times 2$ blocks than the special ones described above, can be obtained by applying Givens rotations to appropriate $2\times 2$ triangular blocks of classical real Schur forms.

\begin{example}
The matrix
\[  T  = \begin{bmatrix}
1 & 2 & 3 & 4 & 5\\
6 & 7 & 8 & 9 & 0\\
0 & 0 & 1 & 3 & 7\\
0 & 0 & -4 & 2 & 6\\
0 & 0 & 0 & 0 & 5
\end{bmatrix}   \]
is in modified real Schur form. Note that in this example the top $2 \times 2$ block on the diagonal is associated with two \emph{real} eigenvalues, a case which would not be allowed in the classical real Schur form.
\end{example}
More formally, we partition $\{1,2,\dots,n\}$ into
\begin{equation} \label{partition}
\mathfrak{I} = \{\{1,2\}, \{3,4\}, \{5,6\}, \dots, F\}, \quad 
F = \begin{cases}
\{n-1,n\} & \text{$n$ even,} \\
\{n\} & \text{$n$ odd}.
\end{cases}    
\end{equation}
With this notation, $T$ is in modified real Schur form if $T_{IJ}=0$ for any two elements $I,J \in \mathfrak{I}$ of the partition such that $I>J$ (in the natural order).

Furthermore, to solve the real analogue of problem~\eqref{auxmin}, we will use a projection $p_{S(\Omega, 2, \mathbb{R})}$, i.e., a function that maps any matrix $A$ to a minimizer of the distance from $A$ among all $2 \times 2$ real matrices with eigenvalues in $\Omega$. Note that this is just a special case of ~\eqref{distRN}, with the (closed) set $S(\Omega,n,\R)$ in place of $\Omega$, since $\mathbb{R}^{2\times 2} \simeq \mathbb{R}^4$.

We can now state Lemma~\ref{lem:minimizer2} and Theorem~\ref{thm:equivalence2}, the real analogues of Lemma~\ref{lem:aux} and Theorem \ref{thm:equivalence}. We omit their proofs, which are very similar to the complex case.
\begin{lemma} \label{lem:minimizer2}
Let $\hat{A} \in \R^{n\times n}$ be given. Then, a solution of
\begin{equation} \label{auxmin2}
	\mathcal{T}(\hat{A}) = \min_{ \substack{T \in S(\Omega,n,\R) \\ \text{$T$ in modified real Schur form}}} \norm{\hat{A} - T}_F^2	
\end{equation}
is given by
\begin{equation} \label{blockT}
\mathcal{T}(\hat{A})_{IJ} = \left\{\begin{array}{lll}
    \hat{A}_{IJ} & I<J, & \text{(block upper triangular part)},\\
    p_{S(\Omega, 2, \mathbb{R})}(\hat{A}_{II}) & I=J, & \text{(block diagonal part)},\\
    0 & I > J & \text{(block lower triangular part)},\\
\end{array}\right.
\end{equation}
where $I,J \in \mathfrak{I}$ (as in~\eqref{partition}).
\end{lemma}
Analogously, we can define the block lower triangular matrix $\mathcal{L}(\hat{A}) = \hat{A}-\mathcal{T}(\hat{A})$, with blocks
\begin{equation} \label{blockL}
\mathcal{L}(\hat{A})_{IJ} = \left\{\begin{array}{lll}
    0 & I<J,\\
    p_{S(\Omega, 2, \mathbb{R})}(\hat{A}_{II}) & I=J,\\
    \hat{A}_{IJ} & I > J,\\
\end{array}\right.
\end{equation}
so that the optimum of~\eqref{auxmin2} is $\norm{\mathcal{L}(\hat{A})}_F^2$.

Similarly to the complex case, we can recast any (real) nearest $\Omega$-stable matrix problem as the optimization problem over the manifold of orthogonal matrices
\begin{equation} \label{minriemannreal}
	\min_{Q \in O(n)} \norm{\mathcal{L}(Q^\top AQ)}_F^2,
\end{equation}
where the function $\mathcal{L}$ is defined by either~\eqref{L} (if $\Omega \subseteq \R$) or~\eqref{blockL} (otherwise).

\begin{theorem} \phantom{A}\label{thm:equivalence2}
\begin{enumerate}
  \item The optimization problems~\eqref{minproblem} and~\eqref{minriemannreal} have the same minimum value.
  \item If $Q$ is a local (resp. global) minimizer for~\eqref{minriemannreal}, then $B = QTQ^\top $, where $T = \mathcal{T}(Q^\top AQ)$, is a local (resp. global) minimizer for~\eqref{minproblem}.
  \item If $B$ is a local (resp. global) minimizer for~\eqref{minproblem}, and $B=QTQ^\top $ is a (modified) Schur decomposition, then $Q$ is a local (resp. global) minimizer for~\eqref{minriemannreal} and $T = \mathcal{T}(Q^\top AQ)$.
\end{enumerate}
\end{theorem}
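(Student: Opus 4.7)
The proof will follow the template of Theorem \ref{thm:equivalence}, with three substitutions: real transpose in place of Hermitian transpose (so $O(n)$ in place of $U(n)$), the modified real Schur decomposition in place of the classical Schur decomposition, and Lemma \ref{lem:minimizer2} in place of Lemma \ref{lem:aux}. The crucial enabling ingredient is that every matrix $X \in S_{\Omega,n,\R}$ admits a modified real Schur decomposition $X = QTQ^\top$ with $Q \in O(n)$ and $T \in \mathcal{M}_\Omega$, where $\mathcal{M}_\Omega$ denotes the set of matrices in modified real Schur form with spectrum contained in $\Omega$. This was already established in the discussion preceding the example: one first takes an ordinary real Schur form, reorders the diagonal blocks so that the non-trivial $2 \times 2$ blocks appear before the trivial ones, and then applies Givens rotations where needed to place the remaining real eigenvalues into the prescribed block pattern.

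The first step is to prove part (1) via the chain of equalities
\begin{align*}
\min_{X \in S_{\Omega,n,\R}} \norm{A-X}_F^2 &= \min_{Q \in O(n)} \min_{\substack{T \in \mathcal{M}_\Omega}} \norm{A - QTQ^\top}_F^2 \\
&= \min_{Q \in O(n)} \min_{\substack{T \in \mathcal{M}_\Omega}} \norm{Q^\top A Q - T}_F^2 \\
&= \min_{Q \in O(n)} \norm{\mathcal{L}(Q^\top A Q)}_F^2,
\end{align*}
where the first equality uses the existence of the modified real Schur decomposition, the second uses orthogonal invariance of the Frobenius norm under the map $M \mapsto Q^\top M Q$, and the third is Lemma \ref{lem:minimizer2}. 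Tracking when equality is attained along this chain yields the global minimizer correspondence in both directions of parts (2) and (3): if $Q$ globally minimizes the Riemannian objective then $B := Q \mathcal{T}(Q^\top A Q) Q^\top$ globally minimizes the original problem with matching value, and conversely any modified Schur decomposition of a global minimizer $B$ produces a global minimizer $Q$ of the Riemannian problem.

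For the local minimizer statements, I would introduce the continuous map $\Phi \colon O(n) \times \mathcal{M}_\Omega \to S_{\Omega,n,\R}$, $\Phi(Q,T) := QTQ^\top$, which is surjective by the existence of modified Schur decompositions. Direction (2): if $Q$ is a local minimizer of $g(Q') := \norm{\mathcal{L}(Q'^\top A Q')}_F^2$ and $B := \Phi(Q, \mathcal{T}(Q^\top A Q))$, then for any $B'$ close to $B$ in $S_{\Omega,n,\R}$ one can choose a modified Schur decomposition $B' = Q'T'Q'^\top$ with $(Q',T')$ close to $(Q, T)$, and then
\[\norm{A-B'}_F^2 = \norm{Q'^\top A Q' - T'}_F^2 \geq \norm{\mathcal{L}(Q'^\top A Q')}_F^2 = g(Q') \geq g(Q) = \norm{A-B}_F^2.\]
Direction (3) is symmetric: every $Q'$ near $Q$ produces $B' := Q' \mathcal{T}(Q'^\top A Q') Q'^\top$ near $B$, so $g(Q') = \norm{A-B'}_F^2 \geq \norm{A-B}_F^2 = g(Q)$.

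The main delicate point is the claim used in direction (2) that a modified Schur decomposition of $B'$ can always be chosen to depend continuously on $B'$ near $B$, i.e.\ that $\Phi$ admits a continuous local section near any chosen preimage. This is not automatic when $B$ has repeated eigenvalues, or when an eigenvalue pair lies precisely on the boundary between the real and complex conjugate cases, where a $2 \times 2$ block may split into two real eigenvalues or vice versa. However, this is precisely the same obstacle that arises (and is handled) in the complex case of Theorem \ref{thm:equivalence}: one reduces to the smooth locus by a perturbation argument, or equivalently observes that $\Phi$ is an open map so that neighborhoods of $B$ lift to neighborhoods of $(Q,T)$, which is all that the local minimizer transfer requires.
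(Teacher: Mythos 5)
Your chain of equalities is exactly the paper's argument: the paper proves part (1) in this way for the complex case (Theorem~\ref{thm:equivalence}), then simply writes ``All the statements follow from this equivalence,'' and for Theorem~\ref{thm:equivalence2} omits the proof entirely, asserting it is ``very similar to the complex case.'' So up through the global-minimizer part your proposal matches the paper. Where you diverge is in trying to actually justify the local-minimizer transfer, which the paper glosses over --- and here there is a genuine gap in your argument.

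The claim you lean on, that $\Phi(Q,T)=QTQ^\top$ is an open map so that ``neighborhoods of $B$ lift to neighborhoods of $(Q,T)$,'' is false in general. Take $n=2$, $\Omega=\R$, $B=0$, with the chosen preimage $(Q,T)=(I,0)$. For $B'=\varepsilon E_{21}$ (arbitrarily close to $B$ and trivially in $S_{\R,2,\R}$), a direct computation with $Q'=\left[\begin{smallmatrix}\cos\theta&-\sin\theta\\ \sin\theta&\cos\theta\end{smallmatrix}\right]$ gives $(Q'^\top B'Q')_{21}=\varepsilon\cos^2\theta$, so the only rotations that triangularize $B'$ have $\theta=\pm\pi/2$ --- bounded away from $I$. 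Thus no neighborhood of $(I,0)$ maps onto a neighborhood of $0$ in $S_{\R,2,\R}$: $\Phi$ is not open at $(I,0)$. Your fallback, that ``this is precisely the same obstacle that arises (and is handled) in the complex case,'' is also inaccurate: the paper's complex proof handles nothing of this sort, it just asserts the conclusion. So the local-minimizer directions (2) and (3), as you have written them, rest on a false lemma. (The compactness of $O(n)$ and the fact that the quantity $\norm{\hat A-T}_F^2-\norm{\mathcal{L}(\hat A)}_F^2\ge 0$ can substitute for the local section if argued carefully, but that argument is not in your write-up.) There is also a second, unaddressed point in direction (3): you use $\norm{A-B}_F^2=g(Q)$, i.e.\ $T=\mathcal{T}(Q^\top AQ)$, without proving it, and this is nontrivial precisely when $S_{\Omega,2,\R}$ is nonconvex, since then a diagonal block $T_{II}$ could a priori be a non-global local minimizer of the $2\times2$ projection.
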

Moreover, it is clear that once again we can restrict the optimization problem to $Q \in SO(n)$. This concludes the theoretical overview needed to reformulate (complex or real) nearest $\Omega$-stable matrix problems as minimization over (unitary or orthogonal) matrix manifolds. 

To solve~\eqref{minriemann2} or~\eqref{minriemannreal} in practice for a given closed set $\Omega$, we need an implementation of the projection $p_\Omega$. In addition, for real problems with $\Omega \not \subseteq \R$, we also need an implementation of $p_{S(\Omega,2,\mathbb{R})}$ for $2\times 2$ matrices, which is not obvious. In the next section, we discuss how to develop such an implementation in the important cases $\Omega = \Omega_H$ (nearest Hurwitz stable matrix) and $\Omega = \Omega_S$ (nearest Schur stable matrix).

\section{Computing real $2\times 2$ nearest stable matrices}\label{sec:real2by2}

We start with a few auxiliary results that will be useful in this section. Given $\alpha \in \R$, we set
\[
U(\alpha) := \begin{bmatrix}
    \cos \alpha & -\sin \alpha\\
    \sin \alpha & \cos \alpha
\end{bmatrix}.
\]

\begin{lemma} \label{lem:equilibrate2}
Let $A\in \mathbb{R}^{2\times 2}$. Then, there exists $\alpha \in [0,\pi/2)$ such that $\hat{A} = U(\alpha)A U(\alpha)^\top$ has $\hat{A}_{1,1}=\hat{A}_{2,2} = \frac12 \trace(A)$.
\end{lemma}
\begin{proof}
Let $f(\alpha) = \hat{A}_{1,1}-\hat{A}_{2,2}$. If $A_{1,1}=A_{2,2}$ we can take $\alpha=0$. Otherwise, note that $f(0) = A_{1,1}-A_{2,2}$ and $f(\pi/2) = A_{2,2}-A_{1,1}$ have opposite signs. Hence, by continuity, there is $\alpha \in (0,\pi/2)$ such that $f(\alpha)=0$. Since $A$ and $\hat{A}$ are similar, they have the same trace, and hence $\trace(A) = \trace(\hat{A}) = 2\hat{A}_{1,1}$.
\end{proof}

\noindent We say that a non-empty closed set $\mathcal{S} \subseteq \mathbb{R}^{2\times 2}$ is \emph{rotation-invariant} if $X\in\mathcal{S}$ implies $U(\alpha)XU(\alpha)^\top \in \mathcal{S}$ for all $\alpha \in \R$.

\begin{lemma} \label{lem:rotinvariant}
Let $\mathcal{S} \subseteq \R^{2 \times 2}$ be rotation-invariant, and let $B$ be a local minimizer of
\[
\min_{X \in \mathcal{S}} \norm{A-X}_F
\]
for some $A\in \mathbb{R}^{2\times 2}$ satisfying $A_{1,1}=A_{2,2}$. Then:
\begin{enumerate}
    \item If $A_{2,1} \neq -A_{1,2}$, then $B_{1,1}=B_{2,2}$.
    \item If $A_{2,1} = -A_{1,2}$, then there exists $\alpha \in [0,\pi/2)$ such that $B = U(\alpha)\hat{B}U(\alpha)^\top$, where $\hat{B}$ is another local minimizer with the same objective value and $\hat{B}_{1,1} = \hat{B}_{2,2}$.
\end{enumerate}
\end{lemma}
\begin{proof}
1. Let $f(\alpha) = \norm{A - U(\alpha)B U(\alpha)^\top}_F^2$. A direct computation shows that
\[
\frac{df}{d\alpha}(0) = 2(A_{1,2}+A_{2,1})(B_{2,2}-B_{1,1}),
\]
Note that $U(\alpha)B U(\alpha)^\top \in \mathcal{S}$ for each $\alpha$. Hence, under our assumptions $f$ has a local minimum for $\alpha=0$. Thus the right-hand side must vanish, and this implies $B_{1,1}=B_{2,2}$.

2. $A$, $U(\alpha)$ and $U(\alpha)^\top$ are matrices of the form $\begin{bsmallmatrix}
    a & -b\\ b & a
\end{bsmallmatrix}$ for some $a,b \in \R$. Hence, they all commute with each other; in particular, for each $X\in \mathcal{S}$,
\begin{equation} \label{commut}
\norm{A - U(\alpha)^\top X U(\alpha)}_F = \norm{U(\alpha) A U(\alpha)^\top - X}_F = \norm{U(\alpha) U(\alpha)^\top A  - X}_F = \norm{A  - X}_F.
\end{equation}
This computation shows that if $B$ is a local minimizer, then all matrices of the form $U(\alpha)^\top B U(\alpha)$ are local minimizers with the same objective value.
By Lemma~\ref{lem:equilibrate2}, we can choose $\alpha$ so that $U(\alpha)^\top BU(\alpha) = \hat{B}$ has $\hat{B}_{1,1} = \hat{B}_{2,2}$.
\end{proof}

\noindent We say that a non-empty closed set $\mathcal{S} \subseteq \mathbb{R}^{2\times 2}$ is \emph{doubly rotation-invariant} if $X\in\mathcal{S}$ implies $U(\alpha)XU(\beta)^\top \in \mathcal{S}$ for any $\alpha,\beta \in \R$.

\begin{lemma} \label{lem:doublyrotinvariant}
Let $\mathcal{S} \subseteq \R^{2 \times 2}$ be doubly rotation-invariant, and let $B$ be a local minimizer of 
\[
\min_{X \in \mathcal{S}} \norm{A-X}_F
\]
for some diagonal $A = \operatorname{diag(\sigma_1,\sigma_2)} \in \R^{2 \times 2}$ with $\sigma_1 > 0$ and $\sigma_1 \geq \sigma_2 \geq 0$. Then:
\begin{enumerate}
    \item If $\sigma_1 \neq \sigma_2$, then $B$ is diagonal with $B_{11}-B_{22} \geq 0$ and $B_{11}+B_{22}\geq 0$.
    \item If $\sigma_1 = \sigma_2$, then there is $\alpha \in [0,\pi/2)$ such that $B = U(\alpha)\hat{B}U(\alpha)^\top$, where $\hat{B}$ is another local minimizer with the same objective value and $\hat{B}_{1,2}=\hat{B}_{2,1}=0$.
\end{enumerate}
\end{lemma}
\begin{proof}
1. We set $f(\alpha,\beta) = \norm{A - U(\alpha+\beta)B U(\alpha-\beta)^\top}_F^2$. Since $U(\alpha+\beta)B U(\alpha-\beta)^\top \in \mathcal{S}$ for all $\alpha,\beta\in \mathbb{R}$, $f$ must have a local minimum for $\alpha=\beta=0$. A direct computation shows that
\begin{subequations}
\begin{align}
\frac{\partial f}{\partial \alpha} (0,0) &= 2(\sigma_1-\sigma_2)(B_{21}+B_{12}), \\
\frac{\partial f}{\partial \beta} (0,0) &= 2(\sigma_1+\sigma_2)(B_{21}-B_{12}),  \label{diffbeta} \\
\frac{\partial^2 f}{\partial \alpha^2} (0,0) &= 4(\sigma_1-\sigma_2)(B_{11}-B_{22}), \\
\frac{\partial^2 f}{\partial \beta^2} (0,0) &= 4(\sigma_1+\sigma_2)(B_{11}+B_{22}).
\end{align}
\end{subequations}
Under our assumptions, $\sigma_1+\sigma_2 > 0$ and $\sigma_1-\sigma_2 > 0$, hence $B_{21}=B_{12}=0$, $B_{11}-B_{22} \geq 0$ and $B_{11}+B_{22}\geq 0$.

2. The matrix $A$ is in the form $\begin{bsmallmatrix}
    a & -b\\
    b & a
\end{bsmallmatrix}$, so~\eqref{commut} holds and shows that $\norm{A-B}_F = \norm{A-U(\alpha)BU(\alpha)^\top}_F$ for each $\alpha$. If $\sigma_1=\sigma_2 \neq 0$ then $\sigma_1+\sigma_2\neq 0$, and~\eqref{diffbeta} again shows that $B_{21}=B_{12}$. A direct computation shows then that $\hat{B} = U(\alpha)^\top BU(\alpha)$ has $\hat{B}_{12}=\hat{B}_{21}$ for each choice of $\alpha$. For $\alpha=0$ we have $\hat{B}_{12}=B_{12}$, while for $\alpha=\pi/2$ we have $\hat{B}_{12}=-B_{12}$. Hence by continuity there is $\alpha\in[0,\pi/2)$ such that $\hat{B}_{21}=\hat{B}_{12}=0$.
\end{proof}

The following result is a simple variant of the Routh--Hurwitz stability criterion (see e.g.~\cite[Sec. 26.2]{handbook}).
\begin{lemma} \label{lem:weakhurwitz}
Both roots of the polynomial $p(\lambda) = a\lambda^2 + b\lambda + c$, with $a \neq 0$ lie in the closed left half-plane $\Omega_H$ if and only if $a,b,c$ are either all non-negative or all non-positive.
\end{lemma}
Note that we can extend the result so that it holds also when $a=0$ by considering $\infty$ as a root belonging to $\Omega_H$.
\begin{proof}
We may assume (after a division) that $a=1$. If $p(\lambda)$ has two complex conjugate roots $\alpha \pm i \beta$, then stability is equivalent to $b = -2 \alpha \geq 0$, whereas $c=\alpha^2 + \beta^2 \geq 0$ is vacuously true. If instead $p$ has two real roots $\lambda_1$ and $\lambda_2$, then $\lambda_1 \leq 0$ and $\lambda_2 \leq 0$ if and only if \[\begin{cases}
b=-\lambda_1 - \lambda_2 \geq 0;\\
c=\lambda_1 \lambda_2 \geq 0.
\end{cases}  \]
\end{proof}

\subsection{Hurwitz stability}

By applying Lemma~\ref{lem:weakhurwitz} to the characteristic polynomial of $X \in \mathbb{R}^{2\times 2}$, it follows immediately that the set of $2 \times 2$ real Hurwitz stable matrices $S(\Omega_H, 2, \R)$ is $\mathcal{S}_t \cap \mathcal{S}_d$, where 
\[  \mathcal{S}_t : =  \{  X \in \R^{2 \times 2} : \trace(X) \leq 0   \}, \qquad  \mathcal{S}_d :=   \{ X \in \R^{2 \times 2} : \det(X) \geq 0.  \}    \]

Observe that the frontiers of $\mathcal{S}_t$ and $\mathcal{S}_d$ are, respectively, the sets of traceless and singular matrices
\[  \partial \mathcal{S}_t = \{  X \in \R^{2 \times 2} : \trace(X)=0  \}, \quad  \partial \mathcal{S}_d = \{ X \in \R^{2 \times 2} : \det(X)=0  \}.     \]

To visualize the geometry of $S(\Omega_H, 2, \R)$, we can assume up to a change of basis that $A_{11} = A_{22}$; indeed, the orthogonal change of basis in Lemma~\ref{lem:equilibrate2} preserves eigenvalues and Frobenius distance. Under this assumption, since $S(\Omega_H,2,\R)$ is rotation-invariant, Lemma~\ref{lem:rotinvariant} shows that (even when there are multiple minimizers) we can always choose a nearest Hurwitz stable matrix $B$ with $B_{11} = B_{22}$. Hence it makes sense to visualize $S(\Omega_H, 2, \R) \cap \{A \in\mathbb{R}^{2\times 2} \colon A_{11}=A_{22}\}$ as a volume parametrized by the three coordinates $A_{11},A_{12},A_{21}$. We show a few images of this set in Figure~\ref{fig:3d}.
\begin{figure}
\centering
\begin{tabular}{ccc}
$\vcenter{\hbox{\includegraphics[width = 0.33\textwidth]{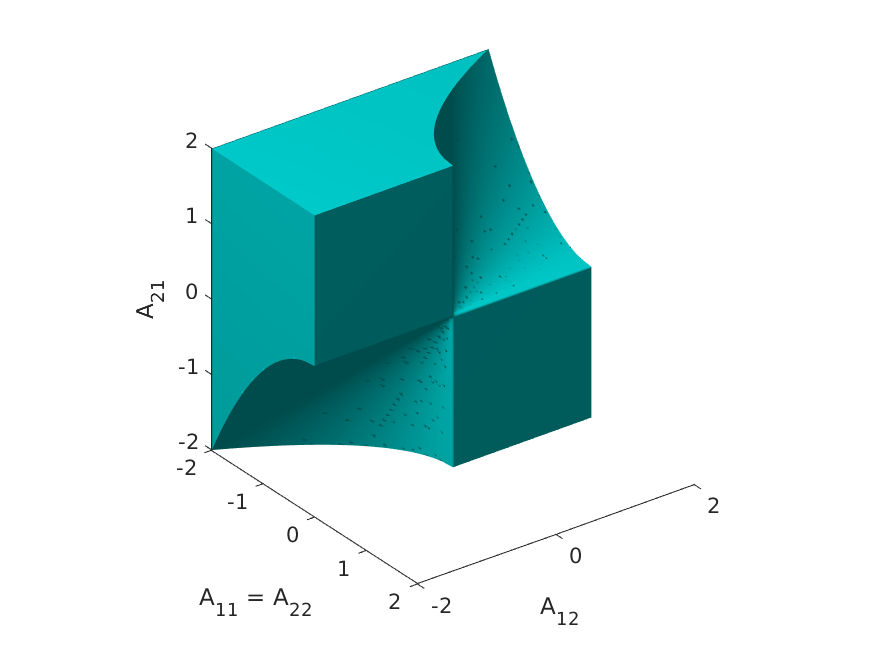}}}$&
$\vcenter{\hbox{\includegraphics[width = 0.33\textwidth]{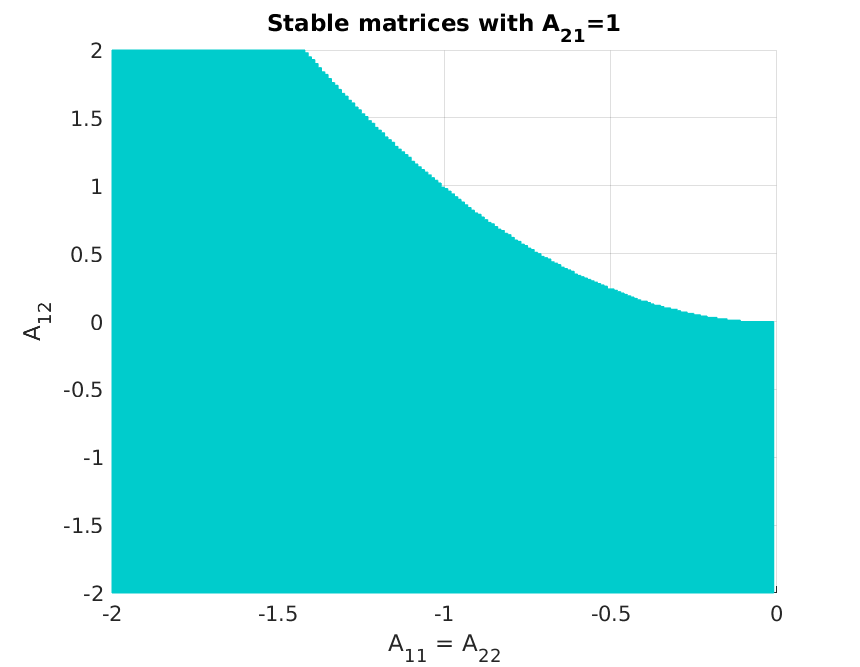}}}$&
$\vcenter{\hbox{\includegraphics[width = 0.33\textwidth]{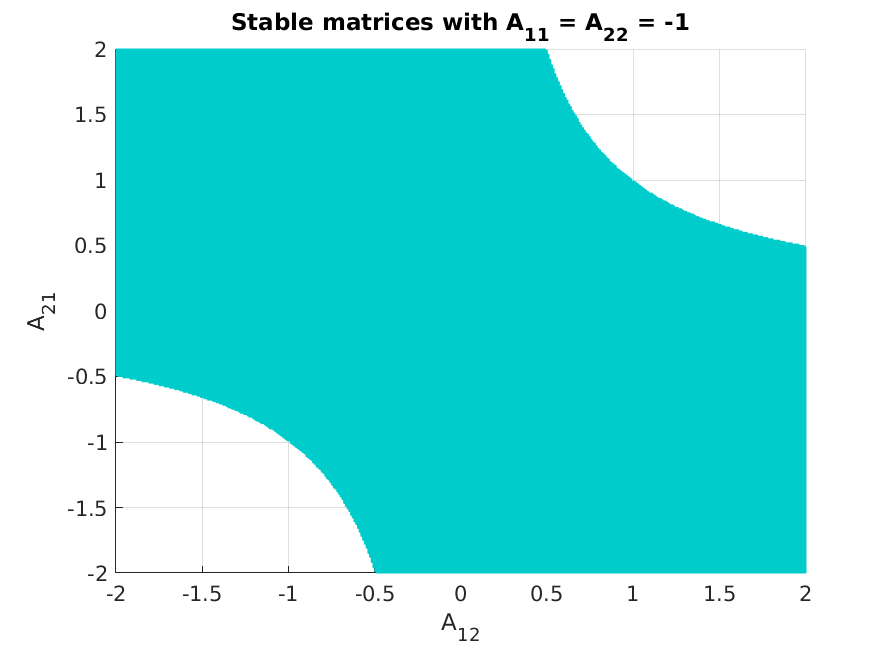}}}$
\end{tabular}
\caption{Some images of the set of Hurwitz stable matrices $S(\Omega_H, 2, \R)$ (in cyan), intersecated with the hyperplane $A_{11}=A_{22}$ and the cube $-2 \leq A_{ij} \leq 2$. The left picture is a 3D view of the set; the center and right one are two 2D sections with $A_{21}=1$ and $A_{11}=A_{22}=-1$, respectively.} \label{fig:3d}
\end{figure}
One can distinguish, in the 3D image on the left of Figure~\ref{fig:3d}, two straight faces (corresponding to the linear condition $\trace(A) = 0$) and two curved ones (corresponding to the nonlinear condition $\det(A) = 0$). The `cross' formed by the two edges $A_{11}=A_{22}=A_{21}=0$ and $A_{11}=A_{22}=A_{12}=0$ corresponds to matrices with two zero eigenvalues.

The following result can be used to find explicitly a nearest Hurwitz stable matrix to $A$.
\begin{lemma} \label{lem:2x2hurwitz}
Let $A\in \mathbb{R}^{2\times 2}$ have a singular value decomposition $A=U \begin{bsmallmatrix}
    \sigma_1\\ & \sigma_2
\end{bsmallmatrix}V^\top $, and $G\in SO(2)$ be a matrix such that $\hat{A}=G^\top AG$ satisfies $\hat{A}_{11}=\hat{A}_{22}$ ($G$ exists by Lemma~\ref{lem:equilibrate2}).

Then, the set
\begin{equation} \label{setofmins}
    \left\{A,\, A - \frac12 \trace(A) I_2, \,
    B_0,\, B_+,\, B_-\right\}
\end{equation}
with
\[
    B_0 = U \begin{bmatrix}
            \sigma_1 & 0\\ 0 & 0
        \end{bmatrix}V^\top 
\]
and
\begin{equation} \label{Bplusminus}
B_+ =       G
\begin{bmatrix}
    0 & \hat{A}_{12}\\
    0 & 0
\end{bmatrix}G^\top , \qquad
B_- = G\begin{bmatrix}
    0 & 0\\
    \hat{A}_{21} & 0
\end{bmatrix}G^\top ,
\end{equation}
contains a Hurwitz stable matrix nearest to $A$.
\end{lemma}

\begin{proof}
We assume that $A$ is not Hurwitz stable, otherwise the result is trivial, and let $B$ be a Hurwitz stable matrix nearest to $A$. One of the following three cases must hold (depending on which of the two constraints $\det(X) \geq 0$ and $\trace(X) \leq 0$ are active):
\begin{enumerate}
    \item $B \in \partial\mathcal{S}_t$, $B \not\in \partial\mathcal{S}_d$; 
    \item $B \in \partial\mathcal{S}_d$, $B \not\in \partial\mathcal{S}_t$, ;
    \item $B \in \partial\mathcal{S}_t \cap \partial\mathcal{S}_d$.
\end{enumerate}
Indeed, any Hurwitz stable matrix $B$ nearest to $A$ must belong to the boundary of the set. This implies that either $\det(B)=0$, or $\trace(B)=0$, or both. We treat the three cases separately.

\begin{enumerate}
    \item $B \in \partial\mathcal{S}_t$, $B \not\in \partial\mathcal{S}_d$. Then, $B$ must be a local minimizer of $\norm{A-X}_F^2$ in $\partial \mathcal{S}_t$. The only such minimizer is $A - \frac12 \trace(A) I$:
this is easy to see by parametrizing $X = \begin{bsmallmatrix}
    x & y\\
    z & -x
\end{bsmallmatrix}\in\partial \mathcal{S}_t$ and studying the resulting function, which is strictly convex trivariate quadratic.

\item $B \in \partial\mathcal{S}_d$, $B \not\in \partial\mathcal{S}_t$.
As in the previous case, note that $B$ must be a local minimizer of $\norm{A-X}_F^2$ in $\partial \mathcal{S}_d$. In addition, $\partial \mathcal{S}_d$ is doubly rotation-invariant, and we may assume $\sigma_1\neq 0$ since otherwise $A=0$, which is Hurwitz stable. We divide further into two subcases.
\begin{enumerate}
    \item $\sigma_1 > \sigma_2$. We have
\[
\norm{A-X}_F = \norm{U \begin{bsmallmatrix}
    \sigma_1 \\ & \sigma_2
\end{bsmallmatrix}V^\top - X}_F = \norm{\begin{bsmallmatrix}
    \sigma_1 \\ & \sigma_2
\end{bsmallmatrix} - U^\top XV}_F,
\]
hence, for a local minimizer $B \in \partial \mathcal{S}_d$, the matrix $C = U^\top BV$ is a local minimizer of $\norm{\begin{bsmallmatrix}
    \sigma_1 \\ & \sigma_2
\end{bsmallmatrix} - U^\top XV}_F$ in $U^\top(\partial \mathcal{S}_d) V = \partial \mathcal{S}_d$. In particular, Lemma~\ref{lem:doublyrotinvariant} shows that
\[
U^\top BV = \begin{bmatrix}
    \tau_1 \\ & \tau_2
\end{bmatrix}, \quad \tau_1, \tau_2 \in\mathbb{R}.
\]
For this matrix to be in $\partial \mathcal{S}_d$, we must have $\tau_1\tau_2=0$. Hence, we have a point $(\tau_1,\tau_2)\in\mathbb{R}^2$ on one of the two coordinate axes $\tau_1=0$ or $\tau_2=0$ which minimizes locally the distance from the point $(\sigma_1,\sigma_2)$ among all points on the axes. An easy geometric argument shows that the only such minimizers are $(\tau_1,\tau_2) = (0,\sigma_2)$ and $(\tau_1,\tau_2) = (\sigma_1,0)$, and only the latter satisfies the condition $\tau_1-\tau_2\geq 0$ coming from Lemma~\ref{lem:doublyrotinvariant}. Hence $U^\top BV = \begin{bsmallmatrix}
    \sigma_1\\ & 0
\end{bsmallmatrix}$, i.e., $B=B_0$.
    \item $\sigma_1 = \sigma_2$. We argue as in the previous case, but now from Lemma~\ref{lem:doublyrotinvariant} it follows only that $U^\top BV = U(\alpha) \begin{bsmallmatrix}
        \tau_1\\ & \tau_2
    \end{bsmallmatrix} U(\alpha)^\top$ ,
    where $\begin{bsmallmatrix}
        \tau_1\\ & \tau_2
    \end{bsmallmatrix}\in\partial \mathcal{S}_d$ is another local minimizer of $\norm{\begin{bsmallmatrix}
    \sigma_1\\ & \sigma_2
\end{bsmallmatrix} - X}_F$. By the same argument as in the previous case, this minimizer must be $\begin{bsmallmatrix}
        \tau_1\\ & \tau_2
    \end{bsmallmatrix} = \begin{bsmallmatrix}
    \sigma_1\\ & 0
\end{bsmallmatrix}$. Hence
\[
B = B_\alpha = U U(\alpha)  \begin{bsmallmatrix}
    \sigma_1\\ & 0
\end{bsmallmatrix} U(\alpha)^\top V^\top.
\]
for some $\alpha \in [0,\pi/2)$. The matrices $B_\alpha$ all have the same distance from $A$, but it may be the case that only some of them are Hurwitz stable\footnote{If $U,V \in SO(2)$ are rotation matrices, then $U(\alpha)$ commutes with $U$ and $V$ and hence $B_\alpha=U(\alpha)B_0U(\alpha)^\top$, so that the eigenvalues of $B_\alpha$ do not depend on $\alpha$. However, generally $U$ and $V$ may have determinant $-1$.}.
 In the case when $B_0$ is Hurwitz stable, $B_0$ is another Hurwitz stable matrix nearest to $A$, and we have proved the thesis. In the case when $B = B_\alpha \in \partial \mathcal{S}_d$ is Hurwitz stable but $B_0 \in \partial \mathcal{S}_d$ is not, by continuity there is a smallest value $\hat{\alpha}$ for which $B_\alpha$ is Hurwitz stable, and $B_{\hat{\alpha}}$ must belong to $\partial \mathcal{S}_t$. In particular, $B_{\hat{\alpha}}$ is another Hurwitz stable matrix nearest to $A$ in $\partial \mathcal{S}_t \cap \partial \mathcal{S}_d$, and by case 3 below $B_{\hat{\alpha}} \in \{B_+, B_-\}$.
\end{enumerate}
\item $B \in \partial\mathcal{S}_t \cap \partial\mathcal{S}_d$. Note that $\partial\mathcal{S}_t \cap \partial\mathcal{S}_d$ is the set of matrices with a double zero eigenvalue, and it is rotation-invariant. We have
\[
\norm{A-X}_F = \norm{G^\top AG - G^\top XG}_F = \norm{\hat{A} - G^\top XG}_F,
\]
hence $B$ is a global minimizer of $\norm{A-X}_F$ in $\partial \mathcal{S}_t \cap \partial \mathcal{S}_d$ if and only if $C = G^\top BG$ is a global minimizer of $\norm{\hat{A} - C}_F$ in $G^\top(\partial\mathcal{S}_t \cap \partial\mathcal{S}_d)G = \partial\mathcal{S}_t \cap \partial\mathcal{S}_d$. By Lemma~\ref{lem:rotinvariant}, we may assume (up to replacing $C$ with another global minimizer) that $C_{1,1}=C_{2,2}$, and since $\trace{C}=0$ it must be the case that $C_{1,1}=C_{2,2}=0$. Then, $\det C=0$ implies that either $C_{21}=0$ or $C_{12}=0$. To minimize $\norm{\hat{A}-C}_F$ under these constraints, the only remaining nonzero entry must be equal to the corresponding entry of $\hat{A}$. Hence $B = B_+$ or $B= B_-$.
\end{enumerate}
\end{proof}

Lemma~\ref{lem:2x2hurwitz} yields an explicit algorithm to find a projection $p_{S(\Omega_H,2,\mathbb{R})}(A)$: we compute the five matrices in the set~\eqref{setofmins}, and among those of them that are Hurwitz stable we choose one which is nearest to $A$. (Note that at least $B_+$ and $B_-$ are always Hurwitz stable, so this algorithm always returns a matrix.)

\begin{example} \label{example:generic}
Let $A = \begin{bmatrix}
    1 & 2\\ 1 & 1 
\end{bmatrix}$. Then,
    \begin{itemize}
        \item $A$ is not Hurwitz stable since it has an eigenvalue $1+\sqrt{2} > 0$.
        \item   $A - \frac12 \trace(A) I_2 = \begin{bmatrix}
    0 & 2\\
    1 & 0
\end{bmatrix}$ is not Hurwitz stable either since it has an eigenvalue $\sqrt{2} > 0$.
\item $B_0 \approx \begin{bmatrix}
    1.17 & 1.89\\
    0.72 & 1.17
\end{bmatrix}$, and this matrix is not Hurwitz stable either since it has a positive eigenvalue $\approx 2.34$.
\item $G=I$, $B_+ = \begin{bmatrix}
    0 & 2\\
    0 & 0
\end{bmatrix}$, and $B_- = \begin{bmatrix}
    0 & 0\\
    1 & 0
\end{bmatrix}$.
\end{itemize}
So the set~\eqref{setofmins} contains only two Hurwitz stable matrices, $B_+$ and $B_-$. We have $\norm{A-B_+}_F = \sqrt{3}$ and $\norm{A-B_-}_F = \sqrt{6}$. The smallest of these two values is achieved by $B_+$, hence by Lemma~\ref{lem:2x2hurwitz} the matrix $B_+$ is a Hurwitz stable matrix nearest to $A$, and we can set $p_{S(\Omega_H,2,\mathbb{R})}(A) = B_+$. Going through the proof of Lemma~\ref{lem:2x2hurwitz}, we can also show that this projection is unique. Note that, by continuity, for any matrix in a sufficiently small neighborhood of $A$ the same inequalities will hold, and hence we will fall under the same case. In particular, the set of matrices $A$ for which $p_{S(\Omega_H,2,\mathbb{R})}(A)$ has a double zero eigenvalue is not negligible.
\end{example}

\begin{remark}
After the present paper appeared as a preprint, a follow-up work has appeared in which the authors find a different formula for the minimizer that does not require comparing several candidates~\cite{KuoEtAl}.
\end{remark}

\subsection{Schur stability}

We can obtain analogous results for Schur stability.

\begin{lemma}
We have $S(\Omega_S,2,\mathbb{R}) = \mathcal{S}_{-} \cap \mathcal{S}_0 \cap \mathcal{S}_{+}$, where
\begin{subequations} \label{schur-conditions}
\begin{align}
\mathcal{S}_0 &= \{X \in \mathbb{R}^{2\times 2} \colon \det(X) \leq 1 \}, \\
\mathcal{S}_{\pm} &= \{X \in \mathbb{R}^{2\times 2} \colon \pm \trace(X) \leq 1 + \det(X) \}.
\end{align}
\end{subequations}
\end{lemma}
\begin{proof}
Let $t = \trace X$ and $d = \det X$ for brevity. The matrix $X$ is Schur stable if and only if its characteristic polynomial $p(\lambda) = \lambda^2 - t\lambda + d$ has both roots inside the closed unit disk. This is equivalent to imposing that its Cayley transform~\cite{HinPbook}, 
\[
q(\mu) := (\mu-1)^2p\left(\frac{\mu+1}{\mu-1}\right) = (1-t+d)\mu^2 + 2(1-d)\mu + (1+t+d),
\]
has both roots inside the (closed) left half-plane. (Note that this result extends to the case when $\lambda=1$ is a root, corresponding to $\mu=\infty$, which is by convention to be considered in the closed left half-plane.) By Lemma~\ref{lem:weakhurwitz}, this holds if and only if the coefficients $1-t+d, 2(1-d), 1+t+d$ are all non-negative or all non-positive. We shall show that these three coefficients cannot all be non-positive at the same time: indeed, if the three relations $1-t+d \leq 0, 1-d\leq 0$ and $1+t+d\leq 0$ hold, then one gets
\[
2 \leq 1+d \leq t \leq -1-d \leq -2,
\]
which is impossible. Hence Schur stability is equivalent to the three coefficients being non-negative, which are the conditions~\eqref{schur-conditions}.
\end{proof}

We note in passing that $X \in \mathcal{S}_+ \cap \mathcal{S}_-$ implies in turn that $-1 -\det(X) \leq 1+ \det(X) \Leftrightarrow \det(X) \geq -1$. Exactly like for Hurwitz stable matrices, one can assume (up to an orthogonal similarity) that $A$ has $A_{11}=A_{22}$, and in this case Lemma~\ref{lem:rotinvariant} ensures that there is a nearest Schur stable matrix $B$ with $B_{11}=B_{22}$. Hence it makes sense to visualize in 3D space the set $S(\Omega_S, 2, \R) \cap \{A \in\mathbb{R}^{2\times 2} \colon A_{11}=A_{22}\}$. A few images of this set are shown in Figure~\ref{fig:3dschur}.
\begin{figure}
\centering
\begin{tabular}{ccc}
$\vcenter{\hbox{\includegraphics[width = 0.33\textwidth]{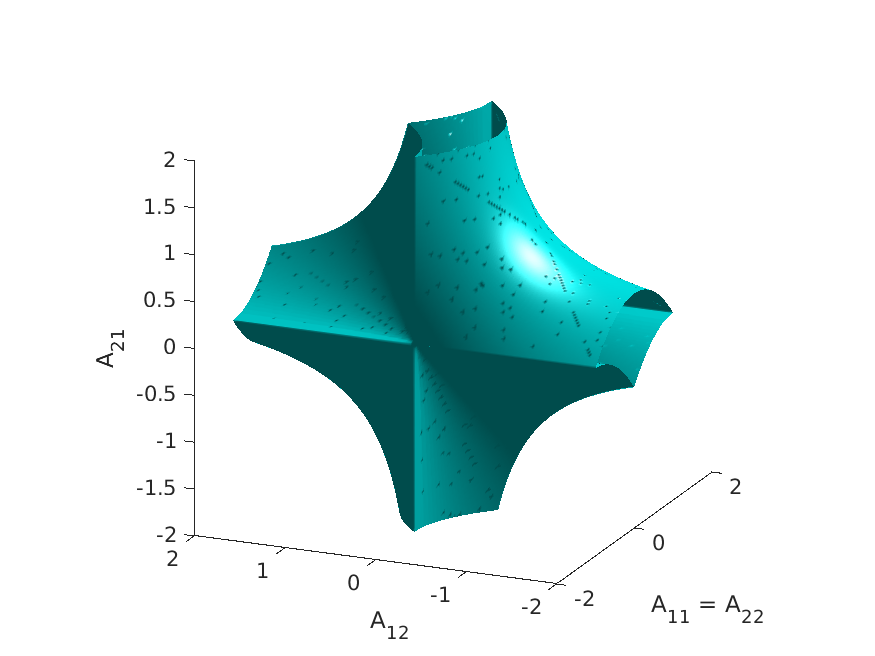}}}$&
$\vcenter{\hbox{\includegraphics[width = 0.33\textwidth]{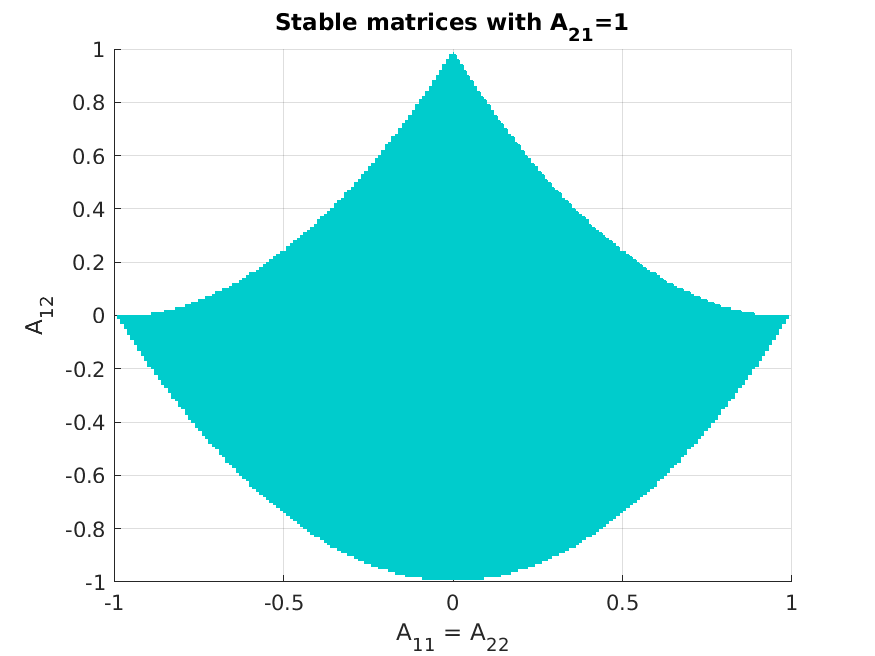}}}$&
$\vcenter{\hbox{\includegraphics[width = 0.33\textwidth]{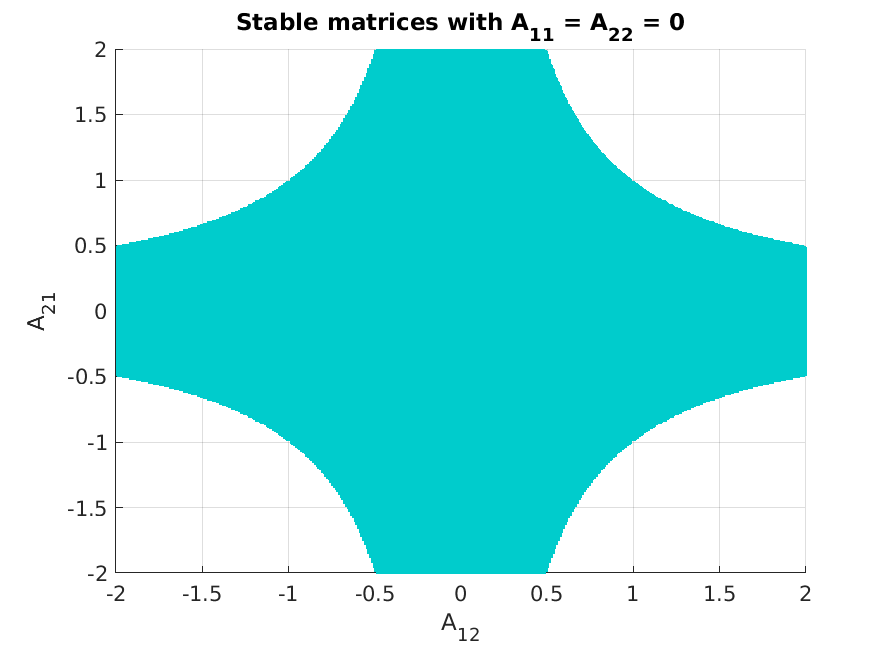}}}$
\end{tabular}
\caption{Some images of the set of Schur stable matrices $S(\Omega_S, 2, \R)$ (in cyan), intersecated with the hyperplane $A_{11}=A_{22}$ and the cube $-2 \leq A_{ij} \leq 2$. The left picture is a 3D view of the set; the center and right one are two 2D sections with $A_{21}=1$ and $A_{11}=A_{22}=0$, respectively.} \label{fig:3dschur}
\end{figure}
Note the cross visible in the front of the figure, formed by the matrices with $A_{11}=A_{22}=-1$ and either $A_{12}=0$ or $A_{21}=0$; these matrices all have a double eigenvalue $-1$. There is an analogous cross in the back of the figure with $A_{11}=A_{22}=1$. In addition, on the top-left and bottom-right side of the 3D figure there are two sharp edges which correspond to the matrices with $A_{11}=A_{22}=0$ and $A_{21}A_{12}=1$. These edges are formed by the intersection of the two smooth surfaces $\mathcal{S}_+$ and $\mathcal{S}_-$, and are formed by points where two constraints are active simultaneously. Observe that there are no corresponding sharp edges in the other two quadrants, since only one constraint is active for the matrices with $A_{11}=A_{22}=0$ and $A_{21}A_{12}=-1$.

To formulate the analogue of Lemma~\ref{lem:2x2hurwitz} for Schur stable matrices, we first define $\mathcal{M}(\sigma_1,\sigma_2)$ to be the set of critical points $(\tau_1,\tau_2)\in\mathbb{R}^2$ of the function
\[
(\tau_1-\sigma_1)^2 + (\tau_2-\sigma_2)^2
\]
subject to $\tau_1 \tau_2=1$. Geometrically, these are the critical points of the distance between a given point $(\sigma_1, \sigma_2)\in\mathbb{R}^2$ and the hyperbola $\tau_1 \tau_2 = 1$. The solutions to this problem can be computed exactly for any given pair $(\sigma_1,\sigma_2)$, since solving
\[
\frac{d}{dt} \left((\sigma_1 - t)^2 + \left(\sigma_2 - \frac{1}{t}\right)^2\right) = 0
\]
amounts to computing the roots of a degree-4 polynomial. In particular, the set $\mathcal{M}(\sigma_1,\sigma_2)$ has at most four elements for any choice of $(\sigma_1,\sigma_2)$.

With this definition, we can state the following lemma.
\begin{lemma} \label{lem:schurmin}
Let $A\in \mathbb{R}^{2\times 2}$, and $G\in SO(2)$ be a matrix such that $\hat{A}=G^\top AG$ satisfies $\hat{A}_{11}=\hat{A}_{22}$ ($G$ exists by Lemma~\ref{lem:equilibrate2}).

Then, the set
\begin{equation} \label{setofschurmins}
    \left\{A,\, B_+, \, B_- \right \} \cup \mathcal{B}_0 \cup \mathcal{B}_{+} \cup \mathcal{B}_{-}  \cup \mathcal{B}_{*} 
\end{equation}
where 
\[
\mathcal{B}_0 = \left\{U_0 \begin{bmatrix}
    \tau_1\\ & \tau_2
\end{bmatrix}V_0^\top \colon (\tau_1,\tau_2) \in \mathcal{M}(\sigma_{0,1},\sigma_{0,2}) \right\} \quad \text{with an SVD $A = U_0 \begin{bmatrix}
    \sigma_{0,1}\\ & \sigma_{0,2}
\end{bmatrix}V_0^\top$},
\]
\[
B_{\pm} = \pm I + U_{\pm} \begin{bmatrix}
    \sigma_{\pm,1}\\  & 0
\end{bmatrix} V_{\pm} \quad \text{with an SVD $A \mp I = U_{\pm} \begin{bmatrix}
    \sigma_{\pm,1}\\ & \sigma_{\pm,2}
\end{bmatrix} V_{\pm}^\top$},
\]
\[
\mathcal{B}_{\pm} = \left\{G\begin{bmatrix}
    \pm 1 & \hat{A}_{1,2}\\ 0 & \pm 1
\end{bmatrix}G^\top, G\begin{bmatrix}
    \pm 1 & 0\\ \hat{A}_{2,1} & \pm 1
\end{bmatrix}G^\top  \right\},
\]
and
\[
\mathcal{B}_{*} = \left\{G \begin{bmatrix}
    0 & \tau_1 \\
    \tau_2 & 0
\end{bmatrix} G^\top \colon (\tau_1,\tau_2) \in \mathcal{M}(\hat{A}_{1,2}, \hat{A}_{2,1}) \right\}.
\]
contains a Schur stable matrix nearest to $A$.
\end{lemma}
Note that the set in~\eqref{setofschurmins} contains at most $15$ elements.
\begin{proof}
The proof of this result is similar to the one of Lemma~\ref{lem:2x2hurwitz} but more involved. For this reason, the arguments that are analogous to those already discussed for Lemma \ref{lem:2x2hurwitz} will only be sketched.

We assume that $A$ is not Schur stable, otherwise the result is trivial, and let $B$ be a Schur stable matrix nearest to $A$. 
Any Schur stable matrix $B$ nearest to $A$ must belong to the boundary of the set $\mathcal{S}_{-} \cap \mathcal{S}_0 \cap \mathcal{S}_{+}$. Observe that $\partial \mathcal{S}_0$ is the set of matrices with determinant $1$, while $\partial \mathcal{S}_{\pm}$ is the set of matrices with an eigenvalue $\pm 1$. In particular, the set $\partial \mathcal{S}_{-} \cap \partial \mathcal{S}_{0} \cap \partial \mathcal{S}_{+}$ where all three constraints are active is empty, since a $2\times 2$ matrix with both $1$ and $-1$ as eigenvalues cannot have determinant $1$. Hence, one of the following cases must hold (corresponding to all possible remaining combinations of active constraints):
\begin{enumerate}
    \item $B \in \partial\mathcal{S}_0$, $B \not\in \partial\mathcal{S}_{+}$, $B \not\in \partial\mathcal{S}_{-}$; 
    \item $B \in \partial\mathcal{S}_{+}$, $B \not\in \partial\mathcal{S}_{0}$, $B \not\in \partial\mathcal{S}_{-}$; 
    \item $B \in \partial\mathcal{S}_{-}$, $B \not\in \partial\mathcal{S}_{0}$, $B \not\in \partial\mathcal{S}_{+}$;
    \item $B \in \partial\mathcal{S}_0 \cap \partial\mathcal{S}_{+}$, $B \not\in \partial\mathcal{S}_{-}$;
    \item $B \in \partial\mathcal{S}_0 \cap \partial\mathcal{S}_{-}$, $B \not\in \partial\mathcal{S}_{+}$;
    \item $B \in \partial\mathcal{S}_{+} \cap \partial\mathcal{S}_{-}$, $B \not\in \partial\mathcal{S}_{0}$;
\end{enumerate}
We treat the six cases separately.
\begin{enumerate}
    \item $B \in \partial\mathcal{S}_0$, $B \not\in \partial\mathcal{S}_{+}$, $B \not\in \partial\mathcal{S}_{-}$. Then, $B$ is a local minimizer of $\norm{A-X}_F$ in the doubly rotation-invariant set $\partial\mathcal{S}_0$, and $C = U_0^\top B V_0$ is a local minimizer of $\norm{\begin{bsmallmatrix}
        \sigma_{0,1} \\ & \sigma_{0,2}
    \end{bsmallmatrix} - X}_F$ in $\partial \mathcal{S}_0$. 

    If $\sigma_{0,1}\neq \sigma_{0,2}$, then $C$ is diagonal by Lemma~\ref{lem:doublyrotinvariant}, and for $C = \begin{bsmallmatrix}
        \tau_1 \\ & \tau_2
    \end{bsmallmatrix}$ (with $\tau_1\tau_2 = \det C = 1$) to be a local minimizer we must have $(\tau_1,\tau_2) \in \mathcal{M}(\sigma_{0,1},\sigma_{0,2})$. Hence, $B \in \mathcal{B}_0$. 

    If $\sigma_{0,1} = \sigma_{0,2}$ instead, then $C = U(\alpha)\begin{bsmallmatrix}
        \tau_1 \\ & \tau_2
    \end{bsmallmatrix} U(\alpha)^\top$ and $B = B_\alpha = U_0 U(\alpha)\begin{bsmallmatrix}
        \tau_1 \\ & \tau_2
    \end{bsmallmatrix} U(\alpha)^\top V_0^\top$ for some $\alpha \in [0,\pi/2)$. If $B_0$ is stable, then $B_0\in \mathcal{B}_0$ is another nearest Schur stable matrix to $A$, and we are done. If $B_0$ is not stable, then there is a minimum $\alpha$ such that $B_\alpha$ is stable, and this is another nearest Schur stable matrix to $A$ in which one more constraint is active; hence it falls in one of the cases 4--6.

    \item $B \in \partial\mathcal{S}_{+}$, $B \not\in \partial\mathcal{S}_{0}$, $B \not\in \partial\mathcal{S}_{-}$. Then,
    \[
    \norm{A-X}_F = \norm{(U_{+}^\top(A - I)V_{+} - U_{+}^\top(X - I)V_{+}}_F,
    \]
    hence $B$ is a local minimizer of $\norm{A-X}_F$ in $\partial \mathcal{S}_+$ if and only if $C = U_{+}^\top(X - I)V_{+}$ is a local minimizer of $\norm{(U_{+}^\top(A - I)V_{+} - X}_F$ in $\partial \mathcal{S}_d$: indeed, $X$ has an eigenvalue $1$ if and only if $U_+^\top(X-I)V_+$ has determinant zero. Arguing as in Case 2 of Lemma~\ref{lem:2x2hurwitz}, either $C = \begin{bsmallmatrix}
        \sigma_{+,1} \\  & 0
    \end{bsmallmatrix}$ and hence $B = B_+$, or we can find a minimizer with the same objective value in one of the cases 4--6.
    \item $B \in \partial\mathcal{S}_{-}$, $B \not\in \partial\mathcal{S}_{0}$, $B \not\in \partial\mathcal{S}_{+}$. We argue as in Case 2, swapping all plus and minus signs.
    \item $B \in \partial\mathcal{S}_0 \cap \partial\mathcal{S}_{+}$, $B \not\in \partial\mathcal{S}_{-}$. Note that $\partial\mathcal{S}_0 \cap \partial\mathcal{S}_{+}$ is the (rotation-invariant) set of matrices with a double eigenvalue $+1$ (since they must have an eigenvalue equal to $+1$ and determinant $1$). $B$ is a minimizer if and only if $C= G^\top BG$ is a minimizer of $\norm{\hat A - X}_F$ in $\partial\mathcal{S}_0 \cap \partial\mathcal{S}_{+}$, and by Lemma~\ref{lem:rotinvariant} we can assume $C_{1,1}=C_{2,2}=\frac12 \trace (C) = \frac12 \trace (B) = +1$. Since $\det C = 1$, at least one among $C_{1,2}$ and $C_{2,1}$ is zero, and to minimize the distance from $\hat{A}$ the other must be equal to the corresponding element of $\hat{A}$. Thus we get $B \in \mathcal{B}_{+}$.
    \item $B \in \partial\mathcal{S}_0 \cap \partial\mathcal{S}_{-}$, $B \not\in \partial\mathcal{S}_{+}$. We argue as in Case 2, swapping all plus and minus signs.
    \item $B \in \partial\mathcal{S}_{+} \cap \partial\mathcal{S}_{-}$, $B \not\in \partial\mathcal{S}_{0}$. Note that $\partial\mathcal{S}_{+} \cap \partial\mathcal{S}_{-}$ is the (rotation-invariant) set of matrices with eigenvalues $1$ and $-1$. Arguing as in Case 4,
    \[
        G^\top BG = C = \begin{bmatrix}
            0 & \tau_1\\
            \tau_2 & 0
        \end{bmatrix}, \quad \tau_1\tau_2 = 1,
    \]
    as $C$ must have $C_{1,1}=C_{2,2}=\frac12 \trace C=0$ and $\det C = -1$. Moreover, to minimize $\norm{\hat{A}-C}_F$ we must have $(\tau_1,\tau_2) \in \mathcal{M}(\hat{A}_{1,2},\hat{A}_{2,1})$, and $B \in \mathcal{B}_{*}$.
\end{enumerate}

\end{proof}

\section{The gradient of the objective function} \label{sec:gradient}

To apply most optimization algorithms, one needs the gradient of the objective function; we compute it in this section. We start from the real case, which is simpler.

\subsection{Computing the gradient: real case} \label{sec:gradient-real}

We assume in this section that $\F = \R$; we wish to compute the gradient of the function $f(Q) = \norm{\mathcal{L}(Q^\top AQ)}_F^2$, where the function $\mathcal{L}(\cdot)$ is given by either~\eqref{L} or~\eqref{blockL}.

In the set-up of optimization on matrix manifolds, the most appropriate version of the gradient to consider is the so-called \emph{Riemannian gradient} \cite[Section~3.6]{AMS08}. In the case of an embedded manifold (which includes our case, since $O(n)$ is embedded in $\mathbb{R}^{n\times n} \simeq \mathbb{R}^{n^2}$) the Riemannian gradient $\grad f$ is simply the projection on the tangent space $T_Q O(n)$ of the Euclidean gradient of $f$: i.e., its gradient $\nabla_Q f$ when $f$ is considered as a function in the ambient space $\mathbb{R}^{n\times n} \to \mathbb{R}$.

Thus we start by computing the Euclidean gradient $\nabla_Q f$. The function $f = g \circ h$ is the composition of $g(X) = \norm{\mathcal{L}(X)}_F^2$ and $h(Q) = Q^\top AQ$. Here and in the following, for ease of notation, we set $L := \mathcal{L}(Q^\top AQ)$, together with $T := \mathcal{T}(Q^\top AQ)$ and $\hat{A}=Q^\top AQ = L+T$. The gradient of $g$ is
\[
\nabla_X g = 2 \mathcal{L}(X):
\]
this follows from the fact that when $i \neq j$ the gradient of $L_{ij}^2$ is $2L_{ij}$, and by Theorem~\ref{thm:distance-function} the gradient of $L_{ii}^2 = \abs{X_{ii} - p_\Omega(X_{ii})}^2$ is $2(X_{ii} - p_\Omega(X_{ii}))$. 

When one uses the block version~\eqref{blockL} of the function $\mathcal{L}(\cdot)$, the same result holds blockwise: the gradient of $\norm{L_{II}}_F^2 = \norm{X_{II} - p_{S(\Omega,2,\mathbb{R})}(X_{II})}_F^2$ is $2(X_{II} - p_{S(\Omega,2,\mathbb{R})}(X_{II}))$ even when $X_{II}$ is a $2\times 2$ block. This follows again from Theorem~\ref{thm:distance-function}, applied to the closed set $S(\Omega,2,\R) \subset \R^{2\times 2} \simeq \R^4$.

The Fréchet derivative of $h(Q)$ is $D_h[Q](H) = H^\top AQ + Q^\top AH$; hence, using vectorization and Kronecker products~\cite[Section~11.4]{handbook} its Jacobian is

\begin{equation*}
	J_h (\vvec Q) = ((AQ)^\top  \otimes I)\Pi + I \otimes Q^\top A,	
\end{equation*}
where the permutation matrix $\Pi=\Pi^\top $ is often called the \emph{vec-permutation matrix} \cite{ANT19} or the \emph{perfect shuffle matrix}, and it is defined as the $n^2\times n^2$ matrix such that $\Pi \operatorname{vec}(X) = \operatorname{vec} (X^\top) $ for all $X$.

By the chain rule,
\[
(\vvec \nabla_Q f)^\top  = (\vvec \nabla_{Q^\top AQ} g)^\top  J_h (\vvec Q),
\]
and hence, transposing everything,
\begin{align*}
\vvec \nabla_Q f &= \left(\Pi (AQ \otimes I) + I \otimes (A^\top Q)\right) 2\operatorname{vec} L
\\&= 2\operatorname{vec} (AQ L^\top  + A^\top Q L).
\end{align*}

We now need to project $\nabla_Q f = 2(AQ L^\top  + A^\top Q L)$ on the tangent space $T_Q O(n)$ to get the Riemannian gradient. One has (see~\cite[Example~3.5.3]{AMS08} or more generally \cite[Lemma 3.2]{ANT19})
\[
T_Q O(n) = \{QS \colon S = -S^\top  \}.
\]
In particular, we can write the projection of a matrix $M$ onto $T_Q O(n)$ by using the skew-symmetric part operator $\sk(G) = \frac12 (G-G^\top )$ as
\[
P_{T_Q O(n)}(M) = Q\sk(Q^\top M).
\]
Thus, recalling $\hat{A}=Q^\top AQ=L+T$,
\begin{align}
\grad f(Q) &= P_{T_Q O(n)}(\nabla_Q f)  \nonumber
\\&= Q \operatorname{skew}(Q^\top  \nabla_Q f)  \nonumber
\\&= 2Q \operatorname{skew}(\hat{A}L^\top  + \hat{A}^\top L)  \nonumber
\\&= 2Q \operatorname{skew}((L+T)L^\top  + (L^\top +T^\top )L)  \nonumber
\\&= 2Q \operatorname{skew}(TL^\top  + T^\top L) \nonumber
\\&= 2Q \operatorname{skew}(TL^\top  - L^\top T). \label{gradf-real}
\end{align}
The matrix $TL^\top  - L^\top T$ is a strictly upper triangular matrix with entries
\[
(TL^\top  - L^\top T)_{ij} = \sum_{k=i}^{j-1}\hat{A}_{ik}\hat{A}_{jk} - \sum_{k=i+1}^j \hat{A}_{ki}\hat{A}_{kj}, \quad i<j.
\]
\begin{remark} \label{rem:gradvanishes-real}
Observe that 
 the gradient vanishes if and only if $TL^\top = L^\top T$. It follows from the definitions of $T$ and $L$ that $Q T Q^\top = B$ and $Q L Q^\top = A-B$, hence changing basis this condition becomes $B(A-B)^\top = (A-B)^\top A$.
\end{remark}

\subsection{Computing the gradient: complex case} \label{sec:gradient-complex}

The computation of the gradient in the complex case is similar, but technically more involved. To work only with real differentiation, following (with a slightly different notation) \cite[Section 2.1]{ANT19}, we define a complex version of the vectorization operator to map $\C^{n\times n}$ into $\R^{2n^2}$
\[
\operatorname{cvec}(X) = \operatorname{vec} \begin{bmatrix}
\Re X & \Im X 
\end{bmatrix} = \begin{bmatrix}
    \operatorname{vec} \Re X\\
    \operatorname{vec} \Im X\\
\end{bmatrix}, \quad \operatorname{cvec}: \mathbb{C}^{n\times n} \to \mathbb{R}^{2n^2}.
\]
Here $\Re X$ and $\Im X$ denote the real and imaginary parts (defined componentwise) of the matrix $X$.

Similarly, we define a complex version of the Kronecker product $\otimes_c$, to obtain a complex version of the identity $\vvec(AXB) = B^\top \otimes A \vvec(X)$.
\begin{equation*}
\operatorname{cvec}(AXB) = \underbrace{\begin{bmatrix}
    \Re B^\top  \otimes \Re A - \Im B^\top  \otimes \Im A & -\Re B^\top  \otimes \Im A - \Im B^\top  \otimes \Re A\\
    \Re B^\top  \otimes \Im A + \Im B^\top  \otimes \Re A & \Re B^\top  \otimes \Re A - \Im B^\top  \otimes \Im A
\end{bmatrix}}_{=:B^* \mathbin{\otimes_c} A}\operatorname{cvec}(X).
\end{equation*}
Note that $(B^* \otimes_c A)^\top  = (B \otimes_c A^*)$.

Finally, let $\Pi_c\in\mathbb{R}^{2n^2\times 2n^2} = \operatorname{diag}(\Pi, -\Pi)$ be the permutation matrix (with $\Pi_c = \Pi_c^\top $) such that $\Pi_c \operatorname{cvec}(X) = \operatorname{cvec}(X^*)$. We now have all the complex vectorization machinery available to perform a direct analogue of the computation in the previous section. Again, for ease of notation we define $\hat{A} = U^\top AU=L+T$, with $L = \mathcal{L}(\hat{A})$ and $T = \mathcal{T}(\hat{A})$.

The gradient of $g(X) = \norm{\mathcal{L}(X)}_F^2$ is $\nabla_X g = 2 \mathcal{L}(X)$. The Jacobian of $h(U) = U^*AU$ is
\[
J_h(U) = ((AU)^* \otimes_c I)\Pi_c + I \otimes_c (U^*A).
\]
Hence
\begin{align*}
\operatorname{cvec} \nabla_U f &= (J_h(U))^\top \operatorname{cvec} \nabla_{\hat{A}} g
\\&= \left( \Pi_c ((AU) \otimes_c I) + I\otimes_c A^*U \right) 2\operatorname{cvec}(L)
\\&= 2\operatorname{cvec}(AUL^*+A^*UL).
\end{align*}

The tangent space to the manifold of unitary matrices is, by a special case of \cite[Lemma 3.2]{ANT19},
\[
T_U U(n) = \{US: S = -S^*\},
\]
and the associated projection is 
\[
P_{T_U U(n)}(M) = U \sk (U^*M), \quad \sk(X) = \frac12(X-X^*).
\]
Thus
\begin{align}
\grad f(U) &= P_{T_U U(n)}(\nabla_U f) \nonumber
\\&= 2U\operatorname{skew}(\hat{A}L^* + \hat{A}^*L) \nonumber
\\&= 2U\operatorname{skew}((L+T)L^* + (L^*+T^*)L) \nonumber
\\&= 2U\operatorname{skew}(TL^* + T^*L) \nonumber
\\&= 2U\operatorname{skew}(TL^* - L^*T). \label{gradf-complex}
\end{align}
The diagonal entries of $TL^* - L^*T$ are equal to $T_{ii}\overline{L_{ii}} - \overline{L_{ii}}T_{ii} =0$, hence that matrix is again strictly upper triangular. Its nonzero entries are given by
\[
(TL^* - L^*T)_{ij} = 
T_{ii}\overline{\hat{A}_{ji}} + \sum_{k=i+1}^{j-1} \hat{A}_{ik}\overline{\hat{A}_{jk}} + \hat{A}_{ij}\overline{L_{jj}} - \overline{L_{ii}}\hat{A}_{ij} - \sum_{k=i+1}^{j-1} \overline{\hat{A}_{ki}}\hat{A}_{kj} - \overline{\hat{A}_{ji}}T_{jj}, \quad i<j.
\]

\begin{remark} \label{rem:gradvanishes-complex}
By an argument analogous to that in Remark~\ref{rem:gradvanishes-real}, $\grad f(U)$ vanishes if and only if $B(A-B)^* = (A-B)^*B$.
\end{remark}

\section{Numerical experiments}\label{sec:numexp}

We used the solver \texttt{trustregions} from the Matlab package Manopt~\cite{BMAS14} (version 6.0) for optimization on matrix manifolds. The solver is a quasi-Newton trust-region method; a suitable approximation of the Hessian is produced automatically by Manopt using finite differences.

For problems with $\F=\R$, we specified the manifold $O(n)$, the cost function $f(Q) = \norm{\mathcal{L}(Q^\top AQ)}_F^2$ (with $\mathcal{L}(\cdot)$ as in~\eqref{blockL}) and the Riemannian gradient $\operatorname{grad} f(Q) = 2Q \operatorname{skew}(TL^\top -L^\top  T)$. For problems with $\F=\C$, we specified the manifold $U(n)$, the cost function $f(Q) = \norm{\mathcal{L}(Q^\top AQ)}_F^2$ (with $\mathcal{L}(\cdot)$ as in~\eqref{L}) and the Riemannian gradient $\operatorname{grad} f(U) = 2U \operatorname{skew}(TL^* -L^* T)$.

All the experiments were performed on an Intel i7-4790K CPU \@ 4.00~GHz with 32~Gib of RAM, running Matlab R2018b (Lapack 3.7.0 and MKL 2018.0.1) on a Linux system. Our implementation of the algorithm for the three cases $\Omega \in \{\Omega_H, \Omega_S, \mathbb{R}\}$ is available for download at \url{https://github.com/fph/nearest-omega-stable}.

\begin{remark}
It is somewhat surprising that our algorithm does not make use of an eigensolver (Matlab's \texttt{eig}, \texttt{schur}, \texttt{eigs} or similar), even though the original problem~\eqref{minproblem} is intrinsically about eigenvalues. Indeed, the minimization procedure itself is as an eigensolver in a certain sense: it computes a Schur form by trying to minimize the strictly subdiagonal part of $Q^\top AQ$, not much unlike Jacobi's method for eigenvalues  (\cite{Gre55-nonsymmetric-jacobi}, see also~\cite{Meh08-nonsymmetric-jacobi} for more references). Indeed, when run with $\Omega = \C$, the method essentially becomes a Jacobi-like method to compute the Schur form.
\end{remark}

\subsection{Experiments from Gillis and Sharma} \label{hurwitz-experiments}

We took matrices with dimension $n\in \{10,20,50,100\}$ of six different types as follows.
\begin{center}
\begin{tabular}{rl}
\toprule
Type & Description\\
\midrule
1 & $A_{ij}= \begin{cases}
1 & j-i = -1,\\
-0.1 & (i,j)=(1,n),\\
0 & \text{otherwise.}
\end{cases}$\\
2 & $A_{ij}= \begin{cases}
-1 & j-i =-1,\\
-1 & j-i \in {0,1,2,3},\\
0 & \text{otherwise.}
\end{cases}$ (Matlab's \texttt{gallery('grcar', n)}).\\
3 & Random with independent entries in $N(0,1)$ (Matlab's \texttt{randn(n)}).\\
4 & Random with independent entries in $U([0,1])$ (Matlab's \texttt{rand(n)}).\\
5 & \parbox{0.6\textwidth}{Complex random with independent entries in $N(0,1)$ (Matlab's \texttt{randn(n) + 1i*randn(n)}).}\\
6 & \parbox{0.6\textwidth}{Complex random with independent entries in $U([0,1])$ (Matlab's \texttt{rand(n) + 1i*rand(n)}).}\\
\bottomrule
\end{tabular}
\end{center}
Types 1--4 were used, with the same dimensions, in the paper by Gillis and Sharma~\cite{GS17}. We added types 5--6 to test the complex version of our algorithm as well as the real one.

For each matrix, we computed the nearest Hurwitz stable matrix ($\Omega = \Omega_H$) with $\F=\R$ for matrices of types 1--4 , and $\F=\C$ for matrices of types 5--6. We compared our results with the algorithms BCD, Grad, FGM from~\cite{GS17}, the algorithm SuccConv from~\cite{ONV13}, and a BFGS algorithm based on GRANSO~\cite{CurMO} (which is an improvement of HANSO~\cite{LewO}). The new algorithm presented here is dubbed Orth. The implementations of all competitors are by their respective authors and are taken from Nicolas Gillis' website \url{https://sites.google.com/site/nicolasgillis/code}. We highlight the fact that we are comparing methods that rely on different formulations of the problem (based on different parametrizations of the feasible set); we are not simply plugging in a different general-purpose optimization algorithm.

\pgfplotscreateplotcyclelist{ourlist}{
red, solid, every mark/.append style={solid}, mark=x\\
red, dashed, every mark/.append style={solid}, mark=o\\
red, dotted, every mark/.append style={solid}, mark=star\\
green, solid, every mark/.append style={solid}, mark=square\\
green, dashed, every mark/.append style={solid}, mark=triangle\\
blue!50!black, solid, every mark/.append style={solid}, mark=diamond\\
}

\newcommand{\plotmatrixtype}[3]{
	\def\matrixsize{#1}
	\def\allottedtime{#2}
	\def\matrixtype{#3}
	\begin{tikzpicture}[scale=0.5]
	\begin{semilogxaxis}[title={$\text{type}=\matrixtype, n = \matrixsize$}, width=0.45\textwidth, height=0.35\textheight, xmin=-1, xlabel={$\text{CPU time} / s$}, ylabel={$\norm{A-X_k}_F$}, cycle list name=ourlist, legend pos=outer north east]
		\foreach \algo in {1,...,6}
		{
			\pgfplotstableread{data/perfprof6-\matrixsize-\allottedtime-\matrixtype-\algo.dat}{\algo};
			\addplot+[very thick,mark indices={1,}] table[x=time, y=objfun] {\algo};
		}
	\ifnum\matrixtype=3
		\ifnum\matrixsize=100
			\legend{BCD,Grad,FGM,BFGS,Orth}
		\else
			\legend{BCD,Grad,FGM,SuccConv,BFGS,Orth}
		\fi
	\fi
	\end{semilogxaxis}
	\end{tikzpicture}
}

\begin{figure}
\centering
\begin{tabular}{lll}
\plotmatrixtype{10}{20}{1} & \plotmatrixtype{10}{20}{2} &\plotmatrixtype{10}{20}{3} \\
\plotmatrixtype{10}{20}{4} & \plotmatrixtype{10}{20}{5} & \plotmatrixtype{10}{20}{6}
\end{tabular}
\caption{Hurwitz stability: distance vs. time for different matrices of size $10$.} \label{fig:10}
\end{figure}

\begin{figure}
\centering
\begin{tabular}{lll}
\plotmatrixtype{20}{100}{1} & \plotmatrixtype{20}{100}{2} &\plotmatrixtype{20}{100}{3} \\
\plotmatrixtype{20}{100}{4} & \plotmatrixtype{20}{100}{5} & \plotmatrixtype{20}{100}{6}
\end{tabular}
\caption{Hurwitz stability: distance vs. time for different matrices of size $20$.}
\end{figure}

\begin{figure}
\centering
\begin{tabular}{lll}
\plotmatrixtype{50}{300}{1} & \plotmatrixtype{50}{300}{2} &\plotmatrixtype{50}{300}{3} \\ 
\plotmatrixtype{50}{300}{4} & \plotmatrixtype{50}{300}{5} & \plotmatrixtype{50}{300}{6}
\end{tabular}
\caption{Hurwitz stability: distance vs. time for different matrices of size $50$.}
\end{figure}

\begin{figure}
\centering
\begin{tabular}{lll}
\plotmatrixtype{100}{600}{1} & \plotmatrixtype{100}{600}{2} &\plotmatrixtype{100}{600}{3} \\
\plotmatrixtype{100}{600}{4} & \plotmatrixtype{100}{600}{5} & \plotmatrixtype{100}{600}{6}
\end{tabular}
\caption{Hurwitz stability: distance vs. time for different matrices of size $100$.} \label{fig:100}
\end{figure}

The convergence plots in Figures~\ref{fig:10}--\ref{fig:100} show that the new algorithm converges in vastly quicker time scales on matrices of small size. In addition, in all but one case the local minimizers produced by the new algorithm are of better quality, i.e., they have a smaller objective function $\norm{A-B}_F$ than the ones of the competitors. This can be seen in the plots by comparing the lower horizontal level reached by each line.

We report in Figure~\ref{fig:perfplot} the results of another experiment taken from~\cite{GS17}, which aims to produce a performance profile. The result confirms that in the vast majority of the cases the local minima found by the new algorithm have a lower objective function.

\pgfplotstableread{data/randbest.dat}{\mytable}

\pgfplotstablegetcolsof{\mytable}
\edef\mylastj{\number\numexpr\pgfplotsretval-1}

\pgfplotstablecreatecol[
  create col/assign/.code={%
    \def\myrowmin{1e10}
    \pgfplotsforeachungrouped \x in {0,...,\mylastj}{%
    \pgfkeys{/pgf/fpu,/pgf/fpu/output format=sci}
    \pgfmathparse{min(\myrowmin,\thisrowno{\x})}%
    \pgfkeys{/pgf/fpu=false}%
    \let\myrowmin=\pgfmathresult%
    }%
    \pgfkeyslet{/pgfplots/table/create col/next content}\myrowmin%
  }
]{minperf}\mytable

\pgfplotsinvokeforeach{0,...,\mylastj}{
  \pgfplotstablecreatecol[
    create col/assign/.code={%
      \pgfmathparse{\thisrowno{#1}/\thisrow{minperf}}%
      \pgfkeyslet{/pgfplots/table/create col/next content}\pgfmathresult%
    }
  ]{perf-#1}\mytable
}
\makeatletter
\pgfmathdeclarefunction{colleqx}{2}{%
    \begingroup%
    \c@pgf@countd=0
    \pgfmathfloattoint{#1}%
    \pgfplotstableforeachcolumnelement{perf-\pgfmathresult}\of\mytable\as\mycompval{%
      \pgfmathfloatparsenumber{\mycompval}%
      \pgfmathfloatgreaterthan{\pgfmathresult}{#2}%
      \ifpgfmathfloatcomparison\relax\else\advance\c@pgf@countd by1\fi%
    }%
    \pgfplotstablegetrowsof{\mytable}%
    \pgfmathparse{\c@pgf@countd/\pgfplotsretval}%
    \pgfmath@smuggleone\pgfmathresult%
    \endgroup%
}

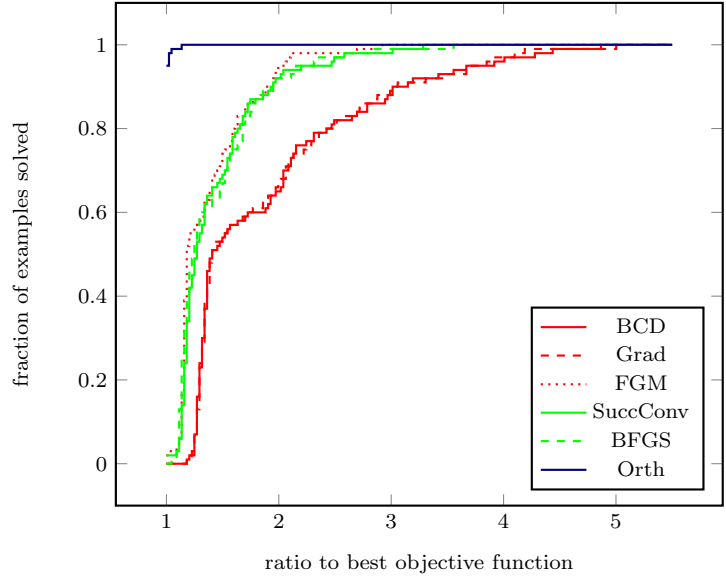
\begin{figure}
\centering
\begin{tikzpicture}
    \begin{axis}[height=0.35\textheight,
                 legend pos=outer north east,no marks,thick,cycle list name=ourlist, legend pos=south east, xlabel={ratio to best objective function}, ylabel={fraction of examples solved}, samples=200]
        \addplot+[const plot,domain=1:5.5] (x,{colleqx(0,x)});
        \addplot+[const plot,domain=1:5.5] (x,{colleqx(1,x)});
        \addplot+[const plot,domain=1:5.5] (x,{colleqx(2,x)});
        \addplot+[const plot,domain=1:5.5] (x,{colleqx(3,x)});
        \addplot+[const plot,domain=1:5.5] (x,{colleqx(4,x)});
        \addplot+[const plot,domain=1:5.5] (x,{colleqx(5,x)});
        \legend{BCD,Grad,FGM,SuccConv,BFGS,Orth}
    \end{axis}
\end{tikzpicture}
\caption{Performance profile of the values of $\norm{A-X}_F$ obtained by the algorithms on 100 random $10\times 10$ matrices (equal split of \texttt{rand} and \texttt{randn}).} \label{fig:perfplot}
\end{figure}

\subsection{Multiple eigenvalues}

Inspecting the minimizers produced by the various methods, we observed that in many cases the new algorithm produces matrices with multiple eigenvalues (especially multiple zero eigenvalues), while other methods settle for worse minimizers with eigenvalues of lower multiplicities (particularly BCD and Grad). An illustrative example is in Figure~\ref{fig:eigenvalues}. We observe that, as illustrated by Example \ref{example:generic}, the instances where the global minimum has multiple eigenvalues are \emph{not} expected to be rare for this problem. This observation provides a qualitative insight to explain why our approach appears to be so highly competitive.

\begin{center}
\begin{figure}[h!]
\begin{tikzpicture}
\begin{axis}[width=0.5\textwidth, cycle list name=ourlist, legend pos = outer north east, title={Eigenvalues of minimizer $B$ for a random $6\times 6$ matrix $A$}]
\addplot+[only marks] table[format=inline]{
   x y
  -1.9552e+00   6.1979e-01
  -1.9552e+00  -6.1979e-01
  -1.1703e-02   2.5450e-01
  -1.1703e-02  -2.5450e-01
   2.1265e-12            0
  -3.6759e-02            0
};
\addplot+[only marks] table[format=inline]{
   x y
  -1.9863e+00   7.7134e-01
  -1.9863e+00  -7.7134e-01
  -7.8281e-03   4.6498e-01
  -7.8281e-03  -4.6498e-01
   2.6966e-13            0
  -2.1897e-02            0
};
\addplot+[only marks] table[format=inline]{
   x y
  -1.9837e+00   7.7292e-01
  -1.9837e+00  -7.7292e-01
  -2.5081e-06   1.4569e-01
  -2.5081e-06  -1.4569e-01
   1.1092e-07            0
  -7.0597e-06            0
};
\addplot+[only marks] table[format=inline]{
   x y
  -1.9837e+00   7.7292e-01
  -1.9837e+00  -7.7292e-01
  -2.5081e-06   1.4569e-01
  -2.5081e-06  -1.4569e-01
   1.1092e-07            0
  -7.0597e-06            0
};
\addplot+[only marks] table[format=inline]{
   x y
  -1.9837e+00   7.7292e-01
  -1.9837e+00  -7.7292e-01
  -2.5081e-06   1.4569e-01
  -2.5081e-06  -1.4569e-01
   1.1092e-07            0
  -7.0597e-06            0
};
\addplot+[only marks] table[format=inline]{
   x y
  -1.9243e+00            0
  -1.9243e+00            0
   6.4514e-05   6.4530e-05
   6.4514e-05  -6.4530e-05
  -6.4514e-05   6.4498e-05
  -6.4514e-05  -6.4498e-05
};
\legend{BCD,Grad,FGM,SuccConv,BFGS,Orth}
\end{axis}
\end{tikzpicture}
\caption{Location of the eigenvalues \texttt{eig(B)} for the local minimizers produced by the methods on a sample random $6\times 6$ matrix $A$. The new method Orth produces a local minimizer with a zero eigenvalue of multiplicity 4 and a negative real eigenvalue of multiplicity 2. Methods FGM, SuccConv, and BFGS produce a different local minimizer (with worse objective function) with a double zero eigenvalue. BCD and Grad produce two (even worse) local minimizers with only one zero eigenvalue and the others having magnitude $\gtrapprox 10^{-2}$.} \label{fig:eigenvalues}
\end{figure}
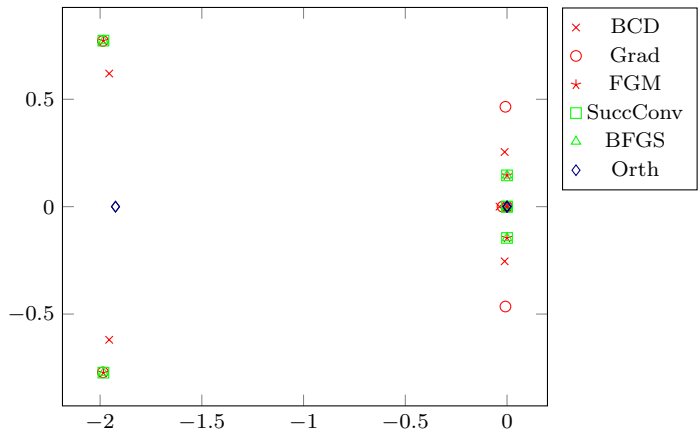
\end{center}

\subsection{Comparison with the method by Guglielmi and Lubich}

Due to the lack of available code, we could not compare directly our method with the algorithm in~\cite{GugL}, which is arguably one of the best competitors. We report some remarks about it based on published figures.

\begin{itemize}
	\item On the matrix \texttt{gallery('grcar', 5)}, the new method computes in 0.1 seconds a different minimizer than the one reported in~\cite{GugL}; both have very close objective values: $\norm{A-B}_F=2.3097$ reported in~\cite{GugL} vs. $2.309628$ found by the new method. We suspect that this minimal difference could simply be due to different stopping criteria.
	\item On \texttt{gallery('grcar', 10)}, the new method computes in 0.3 seconds a minimizer with $\norm{A-B}_F = 3.2834$: this time, undoubtedly a better value than those reported in~\cite{GugL}.
	\item On \texttt{gallery('grcar', 30)}, the new method computes in 3.5 seconds a minimizer with $\norm{A-B}_F = 5.66$, which again improves on the outcomes reported in~\cite{GugL}. The computed minimizer has all its eigenvalues on the imaginary axis: a complex conjugate pair with multiplicity 14 each, and a complex conjugate pair with multiplicity 1 each. In contrast, in~\cite{Gug17slides}, Guglielmi reports a minimizer with $\norm{A-B}_F = 6.57$ and seemingly all distinct eigenvalues all on the imaginary axis, found in 143 seconds (on a different machine, so times cannot be compared directly: still, their very different orders of magnitude suggest that the new algorithm is competitive also in terms of speed).
	\item Guglielmi and Lubich \cite{GugL} report experiments with matrices of size $800$, $1000$, and $1090$. Currently, our method cannot handle those sizes, since the optimization method used stagnates: after an initial improvement it fails to reduce the objective function further, oscillating between various values with nonzero gradient and showing convergence issues in the tCG algorithm used as an inner solver used in the \texttt{trustregions} algorithm. It is an interesting task for future research to study how to improve our algorithm so that it can deal with larger matrices; see Section \ref{sec:conclusions}.
	\item Another advantage of the algorithm in~\cite{GugL} is the ability to deal with many different additional constraints in the form of matrix structure (e.g. sparsity, Toeplitz structure). The new algorithm cannot handle those, and it seems challenging to include them since they do not interact well with the conjugation $Q^\top AQ$ that is a crucial step in our procedure.
\end{itemize}

\subsection{Experiments on Schur stability}

We ran analogous experiments for the case of Schur stability, using (in the case $\F=\R$) Lemma~\ref{lem:schurmin} to solve the $2\times 2$ case. Our terms of comparison were the algorithm FGM from~\cite{GilKS19} and SuccConv from~\cite{ONV13}, again with the code from \url{https://sites.google.com/site/nicolasgillis/code}. The algorithms were run with various choices of initial values as suggested in~\cite{GilKS19}. We refer the reader to that paper for a more detailed description of the other competitors.

Matrices of type 2--6 are the same that were used in Section~\ref{hurwitz-experiments}; type 1 would not make much sense here, since those matrices are already Schur stable for each $n$, so we replaced it with the following, which is a case considered also in~\cite{GilKS19}.
\begin{center}
\begin{tabular}{rl}
\toprule
Type & Description\\
\midrule
1 & all entries equal to 2 (Matlab's \texttt{2*ones(n)}).\\
\bottomrule
\end{tabular}
\end{center}
The results in Figures~\ref{fig:10s}--\ref{fig:100s} confirm the superiority of the new method in this case, too, both in terms of computational time and quality of the minimizers.
\renewcommand{\plotmatrixtype}[4]{
    \def\matrixsize{#1}
    \def\allottedtime{#2}
    \def\matrixtype{#3}
    \begin{tikzpicture}[scale=0.5]
        \begin{axis}[xmode=log, title={$\text{type}=\matrixtype, n = \matrixsize$}, width=0.45\textwidth, height=0.35\textheight, xlabel={$\text{CPU time} / s$}, ylabel={$\norm{A-X_k}_F$}, cycle list name=ourlist, legend pos=outer north east, #4]
            \foreach \algo in {1,...,6}
            {
                \pgfplotstableread{data/perfprofdiscrete-\matrixsize-\allottedtime-\matrixtype-\algo.dat}{\mytable};
                \addplot+[very thick, mark indices={1,}] table[x=time, y=objfun] {\mytable};
            }
        \ifnum\matrixtype=3
            \legend{Stand-FGM,LMI-FGM,mRand-FGM,Stand-SuccConv,LMI-SuccConv,Orth}
        \fi
        \end{axis}
    \end{tikzpicture}
}

\begin{figure}
\centering
\begin{tabular}{lll}
\plotmatrixtype{10}{100}{1}{} & \plotmatrixtype{10}{100}{2}{} &\plotmatrixtype{10}{100}{3}{} \\
\plotmatrixtype{10}{100}{4}{} & \plotmatrixtype{10}{100}{5}{} & \plotmatrixtype{10}{100}{6}{}
\end{tabular}
\caption{Schur stability: distance vs. time for different matrices of size $10$.} \label{fig:10s}
\end{figure}

\begin{figure}
\centering
\begin{tabular}{lll}
\plotmatrixtype{20}{300}{1}{} & \plotmatrixtype{20}{300}{2}{} &\plotmatrixtype{20}{300}{3}{} \\
\plotmatrixtype{20}{300}{4}{} & \plotmatrixtype{20}{300}{5}{} & \plotmatrixtype{20}{300}{6}{}
\end{tabular}
\caption{Schur stability: distance vs. time for different matrices of size $20$.}
\end{figure}

\begin{figure}
\centering
\begin{tabular}{lll}
\plotmatrixtype{50}{600}{1}{} & \plotmatrixtype{50}{600}{2}{} &\plotmatrixtype{50}{600}{3}{} \\ 
\plotmatrixtype{50}{600}{4}{} & \plotmatrixtype{50}{600}{5}{} & \plotmatrixtype{50}{600}{6}{}
\end{tabular}
\caption{Schur stability: distance vs. time for different matrices of size $50$.}
\end{figure}

\begin{figure}
\centering
\begin{tabular}{lll}
\plotmatrixtype{100}{1200}{1}{} & \plotmatrixtype{100}{1200}{2}{ymode=log} &\plotmatrixtype{100}{1200}{3}{} \\
\plotmatrixtype{100}{1200}{4}{} & \plotmatrixtype{100}{1200}{5}{} & \plotmatrixtype{100}{1200}{6}{}
\end{tabular}
\caption{Schur stability: distance vs. time for different matrices of size $100$.} \label{fig:100s}
\end{figure}

In addition, we consider some of the special cases discussed in~\cite{GilKS19} and~\cite{GugP18}. For the matrix
\[
A = \begin{bmatrix}
    0.6 & 0.4 & 0.1\\
    0.5 & 0.5 & 0.3\\
    0.1 & 0.1 & 0.7
\end{bmatrix},
\]
we obtain the same minimizer as~\cite{GilKS19} and~\cite{GugP18} (this is proved to be a global minimizer in~\cite{GugP18}). For the matrix $A = \begin{bsmallmatrix}
    3 & 3\\ 3 & 3
\end{bsmallmatrix}$ we recover one of the two global minimizers given by $\begin{bsmallmatrix}
    1 & 3\\
    0 & 1
\end{bsmallmatrix}$ and its transpose (this is not surprising, because our method based on Lemma~\ref{lem:schurmin} is guaranteed to compute a global minimizer for the real $2\times 2$ case). For the matrix \texttt{2*ones(3,3)}, our method computes the solution
\[
B_1 \approx \begin{bmatrix}
    1.0000 &   0.9550 &   1.5848\\
    1.0450 &   1.0000 &   2.6297\\
    0.4152 &  -0.6297 &   1.0000\\
\end{bmatrix},
\]
with a triple eigenvalue $1$ and $\norm{A-B_1}_F^2$ which is almost exactly $15$ (up to an error of order $10^{-15}$). This matrix is orthogonally similar to
\[
B_2 = \begin{bmatrix}
    1 & 2 & 2\\
    0 & 1 & 2\\
    0 & 0 & 1
\end{bmatrix},
\]
which has $\norm{A-B_2}_F^2=15$ and is very likely a global optimum. A suboptimal solution with entries similar to those of $B_1$ and $\norm{A-B_3}^2 \approx 15.02$ is returned in~\cite{GilKS19}. For the $5\times 5$ matrix appearing in~\cite[Section~4.4]{GugP18}, we obtain a local minimizer $B$ with $\norm{A - B}^2_F \approx 0.5595$, beating the best value 0.5709 reported in~\cite{GilKS19}.

\subsection{Nearest matrix with real eigenvalues}

For a final set of experiments, we have implemented the version of the algorithm that computes the nearest matrix with real eigenvalues. As discussed in Section \ref{sec:realsetup}, in this case we do not need to use the quasi-Schur form, and we can simply take $\mathcal{T}(\hat{A}) = \operatorname{triu}(\hat{A})$, the upper triangular part of $\hat{A}=Q^\top A Q$. Then $\mathcal{L}(\hat{A})= \hat{A} - \mathcal{T}(\hat{A})$ is defined accordingly, and we can use the objective function $f(Q) = \norm{\mathcal{L}(Q^\top A Q)}_F^2$ and the gradient computed in Section~\ref{sec:gradient}. We tested the algorithm on the examples discussed in~\cite{F17}. On the matrix
\[
A = \begin{bmatrix}
    1 & 1 & 0\\
    -1 & 0 & 0\\
    0 & 0 & 0
\end{bmatrix},
\]
the candidates suggested in~\cite{F17} are:

\begin{itemize}
	\item the matrix obtained from the eigendecomposition $A = VDV^{-1}$ as $B_1 = V (\Re D) V^{-1}$, which has distance $\norm{A-B_1}_F = 1.5811$;
	\item the matrix
	\[
	B_2 = \begin{bmatrix}
	    0.9 & 0 & 0 \\
	    -2 & -0.1 & 0\\
	    0 & 0 & 0
	\end{bmatrix},
	\]
	which has distance $\norm{A-B_2}_F = 1.4213$ (this matrix actually is not even a local minimizer);
	\item the matrix obtained from the real Schur factorization $A= QTQ^\top$ as $B_3 = Q\operatorname{triu}(T)Q^\top$, which has distance $\norm{A-B_3}_F = 0.5$. 
\end{itemize}
Our method computes in about 0.1 seconds a minimizer
\[
B_4 \approx \begin{bmatrix}
    1.2110 &   0.7722 &  -0.0962\\
   -0.7722 &  -0.2416 &  -0.0962\\
   -0.0962 &   0.0962 &   0.0306\\
\end{bmatrix},
\]
with a triple eigenvalue at $1/3$ and slightly lower distance $\norm{A-B_4}_F = 0.4946$. So in this example the matrix produced via the truncated real Schur form is very close to being optimal.

Another example considered in~\cite{F17} is the matrix
\[
A = \begin{bmatrix}
    0 & 1 & 0 & 0\\
    -1 & 0 & a & 0\\
    0 & 0 & 0 & 1\\
    0 & 0 & -1 & 0
\end{bmatrix};
\]
we tested this problem setting, for concreteness, $a=10$. The user Christian Remling suggested~\cite{F17} the minimizer (having two double eigenvalues at $\pm 1$)
\[
B_1 = \begin{bmatrix}
    0 & 1 & 0 & 0\\
    -1 & 0 & 10 & 0\\
    0 & 0.4 & 0 & 1\\
    0 & 0 & -1 & 0
\end{bmatrix},
\]
which achieves $\norm{A-B_1}_F = 0.4$, beating the minimizer constructed with the Schur form, which has $\norm{A-B_2}_F = \sqrt{2}$. Our method computes in less than 1 second the minimizer
\[
B_3 \approx \begin{bmatrix}
   -0.0000 &   0.9815 &  -0.0000 &   0.0018\\
   -0.9721 &   0.0000 &  10.0045 &   0.0000\\
   -0.0000 &   0.1907 &   0.0000 &   0.9815\\
    0.0945 &  -0.0000 &  -0.9721 &   0.0000
\end{bmatrix},
\]
with a quadruple eigenvalue at $0$ and $\norm{A-B_3} = 0.2181$.

These results confirm that none of the strategies suggested in~\cite{F17} are optimal, and suggest that minimizers with high-multiplicity eigenvalues are to be expected for this problem as well.

\subsection{The complex case and a conjecture}

An interesting observation is that in all the (limited) examples that we tried, for a real $A\in\mathbb{R}^{n\times n}$ we observed that the solution of 
\begin{equation} \label{realversion}
    B = \arg \min_{S(\Omega_H,n,\R)} \norm{A-X}_F^2    
\end{equation}
was also a solution of the corresponding problem where $X$ is allowed to vary over the larger set of complex matrices, i.e.,
\begin{equation} \label{complexversion}
    \arg \min_{S(\Omega_H,n,\C)} \norm{A-X}_F^2.
\end{equation}
It does not seem obvious to prove that this must be the case, since the set $S(\Omega_H,n,\C)$ is non-convex, and in particular there are examples of matrices such that $X\in\mathbb{C}^{n\times n}$ is Hurwitz stable, but $\Re X$ is not. We formulate it as a conjecture.
\begin{conjecture}
For any $A\in\mathbb{R}^{n\times n}$, the problem~\eqref{complexversion} has a real solution $B$.
\end{conjecture}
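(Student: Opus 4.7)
The central symmetry to exploit is conjugation: since $\Omega_H$ is closed under $z \mapsto \bar z$ and $A$ is real, the map $X \mapsto \overline{X}$ preserves both the feasible set $S_{\Omega_H, n, \C}$ and the objective, because $|A_{ij} - X_{ij}| = |A_{ij} - \overline{X_{ij}}|$ whenever $A_{ij}\in\R$. In particular, if $B$ is a global minimizer then so is $\overline{B}$. Writing $X = X_r + iX_i$ with $X_r, X_i \in \R^{n\times n}$, the objective splits cleanly as $\norm{A-X}_F^2 = \norm{A-X_r}_F^2 + \norm{X_i}_F^2$, so the conjecture is equivalent to saying that some global minimizer has $X_i=0$.

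The naive attempt would be to replace a complex minimizer $B$ by its real part $\Re B = (B+\overline{B})/2$: by the splitting above, the objective would strictly decrease unless $\Im B = 0$. If $S_{\Omega_H, n, \C}$ were convex this would close the argument at once; however, as the authors emphasize in the paragraph preceding the conjecture, the feasible set is non-convex, and there exist Hurwitz stable $X$ with non-Hurwitz-stable $\Re X$. Thus a direct averaging argument fails and one must use not just feasibility but optimality of $B$.

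The approach I would pursue is a one-parameter deformation along $X(t) = \Re B + it\,\Im B$ for $t\in[0,1]$. Along this segment, $\norm{A-X(t)}_F^2 = \norm{A-\Re B}_F^2 + t^2\norm{\Im B}_F^2$ is strictly decreasing in $t$, so optimality of $B=X(1)$ forces $X(t)$ to lie outside $S_{\Omega_H, n, \C}$ for every $t\in[0,1)$, unless $\Im B = 0$ (which is what we want). Writing $\alpha(t)$ for the spectral abscissa of $X(t)$, this means $\alpha(t)>0$ on $[0,1)$ while $\alpha(1)\leq 0$, so $t=1$ is a critical endpoint of $\alpha$ in a suitable nonsmooth sense. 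Combining this with first-order optimality of $B$ for the original problem should give algebraic constraints on the eigenvectors of $B$ attached to eigenvalues on the imaginary axis, which one then aims to convert into the existence of a real competitor of equal or smaller objective. The complex Riemannian reformulation of Section~\ref{sec:complexsetup} offers an auxiliary toolbox: the objective $f(U) = \norm{\mathcal{L}(U^*AU)}_F^2$ satisfies $f(\overline{U})=f(U)$, and one may hope to produce a real orthogonal minimizer of the real problem from a complex unitary minimizer by exploiting this $\mathbb{Z}/2$-symmetry directly in the Schur form.

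The hardest part, and presumably the reason the authors leave this open, is precisely bridging from local first-order information at $B$ to a \emph{global} construction of a real feasible point attaining the minimum. Even the $2\times 2$ case is geometrically intricate, as witnessed by Figure~\ref{fig:3d} and the lengthy case analysis in Lemma~\ref{lem:casedetmin}, so a naive eigenstructure case analysis for general $n$ is unlikely to be tractable. A potentially cleaner route is via semialgebraic geometry: the set of global minimizers is a nonempty real semialgebraic subset of $\C^{n\times n}$ closed under the involution $X\mapsto\overline{X}$, and the conjecture asserts that it meets the fixed set $\R^{n\times n}$ of that involution. Establishing this meeting property, by a connectivity or fixed-point argument tailored to the minimizing set, appears to be the missing ingredient.
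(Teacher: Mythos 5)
What you were asked to prove is stated in the paper as a \emph{conjecture}, not a theorem: the authors offer no proof, remarking only that the claim ``does not seem obvious to prove'' precisely because $S_{\Omega_H,n,\C}$ is nonconvex and there exist Hurwitz stable complex matrices whose real part is not Hurwitz stable. Your write-up correctly identifies the same symmetry observations (the feasible set and the objective are invariant under $X\mapsto\overline{X}$ when $A$ is real, and the objective splits as $\norm{A-\Re X}_F^2 + \norm{\Im X}_F^2$) and the same obstruction, and you honestly concede in your closing paragraph that you do not bridge it. So there is a genuine gap, but it is a gap that the paper itself does not close either.

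Two remarks on the content. First, a small sign slip: along the segment $X(t)=\Re B + it\,\Im B$, the objective $\norm{A-X(t)}_F^2 = \norm{A-\Re B}_F^2 + t^2\norm{\Im B}_F^2$ is \emph{increasing} on $[0,1]$, not decreasing; the conclusion you draw is nevertheless correct, since any feasible $X(t)$ with $t<1$ would strictly beat $B=X(1)$, so optimality forces $X(t)\notin S_{\Omega_H,n,\C}$ for all $t\in[0,1)$ unless $\Im B=0$. Second, and more substantively, the proposed semialgebraic ``fixed-set'' route does not by itself get off the ground: the observation that the set of global minimizers is a nonempty real semialgebraic set closed under the involution $X\mapsto\overline{X}$ does not imply it meets the fixed locus $\R^{n\times n}$ --- a finite set $\{z,\overline{z}\}$ with $z\notin\R$ is conjugation-invariant yet disjoint from $\R$. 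That implication is precisely the conjecture restated, so some additional mechanism (for instance, a connectedness or deformation argument that excludes minimizing conjugate pairs with nonzero imaginary part) would be needed. Until such a mechanism is supplied, the conjecture remains open, in the paper and in your proposal alike.
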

To support this conjecture, we prove a weaker result.
\begin{theorem}
Let $B \in \mathbb{R}^{n\times n}$ be a solution of~\eqref{realversion}, i.e., a real nearest Hurwitz stable matrix to a given $A\in\mathbb{R}^{n\times n}$. Then, $B$ has a complex Schur decomposition $UTU^*$ where $T = \mathcal{T}(U^*AU)$, and $U$ is a stationary point of~\eqref{minriemann2}.
\end{theorem}
\begin{proof}
In this proof, we need to relate the quantities computed in Sections~\ref{sec:gradient-real} and~\ref{sec:gradient-complex}; we use a subscript $\R$ or $\C$ to tell them apart, so for instance the formula in the theorem becomes $T_{\C} = \mathcal{T}_{\C}(U^*AU)$.

We start by proving that $T_{\C} = \mathcal{T}_{\C}(U^*AU)$ in the $2\times 2$ case. We assume that $A$ is not already Hurwitz stable (otherwise the result is trivial), and divide into two subcases.
\begin{itemize}
    \item $B$ has real eigenvalues. Then, one can take a real modified Schur decomposition $B = QT_{\R} Q^\top$ in which $Q$ is real and $T_{\R}$ is truly upper triangular and not a $2\times 2$ block. By Theorem~\ref{thm:equivalence2}, $T_{\R}=\mathcal{T}_{\R}(Q^\top A Q)$ and $Q$ is a minimizer of~\eqref{minriemannreal}, hence $\grad f_{\R}(Q) = 0$. Since $B=QT_{\R} Q^\top$ is also a complex Schur form, we can take $Q=U$ and hence $T_{\C} = T_{\R} = \mathcal{T}_{\C}(U^*AU)$.
    \item $B$ has two complex conjugate eigenvalues $\alpha \pm i\beta$. Then, looking at Lemma~\ref{lem:2x2hurwitz} and its proof, we see that we must fall in case~1, because in all the other cases $B$ has real eigenvalues. Hence $\alpha = 0$ and $\trace(A) > 0$. In addition, $B =A - \frac12 \trace(A) I_2$. So $A$ and $B$ have the same eigenvectors, and (taking an arbitrary complex Schur decomposition $B = UT_{\C}U^*$) the matrix $U^*AU = \frac12 \trace(A) I+T_{\C}$ is upper triangular. Thus it is easy to see that $T_{\C} = \mathcal{T}_{\C}(U^*AU)$.
\end{itemize}
We now show that $T_{\C} = \mathcal{T}_{\C}(U^*AU)$ holds also for larger matrices $A \in \mathbb{R}^{n\times n}$. Let $B = QT_{\R}Q^\top$ be a modified real Schur decomposition. Note that to obtain a complex Schur decomposition it is sufficient to reduce the $2\times 2$ diagonal blocks to upper triangular form with an unitary block diagonal matrix $D$, thus obtaining a decomposition with $T_{\C} = D^*T_{\R}D$ and $U = QD$. Since $B$ is a real minimizer, $T_{\R} = \mathcal{T}_{\R}(Q^*AQ)$.

From these equalities it follows that $ D T_{\C} D^* =\mathcal{T}_{\R}(DU^*AUD^*)$. We can use this equality to prove that $T_{\C} = \mathcal{T}_{\C}(U^*AU)$.  Let us split $T_{\C}$ into blocks according to the partition $\mathfrak{I}$ as in~\eqref{partition}. The maps $\mathcal{T}_{\R}$ and $\mathcal{T}_{\C}$ leave unchanged the blocks in the strictly upper triangular part, hence for $I < J$ we have 
\[
D_{II}(T_{\C})_{IJ} D_{JJ}^* = (\mathcal{T}_{\R}(DU^*AUD^*))_{IJ} = D_{II}(U^*AU)_{IJ} D_{JJ}^*,
\]
thus $(T_{\C})_{IJ} = (\mathcal{T}_{\C}(U^*AU))_{IJ} = (U^*AU)_{IJ}$. On diagonal blocks, $(T_{\R})_{II}$ is a distance minimizer, hence by the $2\times 2$ version of this result that we have just proved
\[
(T_{\C})_{II} = \mathcal{T}_{\C}(D_{II}^*(Q^*AQ)_{II}D_{II}) = \mathcal{T}_{\C}((U^*AU)_{II}).
\]

It remains to prove that $\grad f_{\C}(U)=0$. This follows by combining Remarks~\ref{rem:gradvanishes-real} and~\ref{rem:gradvanishes-complex}: if $B\in \mathbb{R}^{n\times n}$ is a minimizer on the reals, then $\grad f_{\R}(Q)=0$ by Theorem~\ref{thm:equivalence2} and $B(A-B)^\top = (A-B)^\top B$; both $A$ and $B$ are real matrices, so $B(A-B)^* = (A-B)^* B$ also holds, and $\grad f_{\C}(U)$ vanishes.

\end{proof}

\section{Conclusions}\label{sec:conclusions}

In this paper, we introduced a method to solve the nearest $\Omega$-stable matrix problem, based on a completely  novel approach rather than improving on those known in the literature. The algorithm has remarkably good numerical results on matrices of sufficiently small size (up to $n\approx 100$), both in terms of computational time compared to its competitors, and of quality of the local minima found. 

We attribute a good part of the success of this method to the fact that it computes eigenvalues only indirectly; thus, it is able to avoid the inaccuracies associated with multiple eigenvalues. Indeed, it is well-known that computing eigenvalues with high multiplicity is an ill-conditioned problem: if a matrix has an eigenvalue $\lambda$ of multiplicity $k$, a perturbation of size $\varepsilon$ will, generically, produce a matrix with $k$ simple eigenvalues at distance $\mathcal{O}(\varepsilon^{1/k})$ from $\lambda$ (see, for instance, \cite[Chapter~2]{Wil88-algebraic-eigenvalue}). The method can, in principle, be generalized to similar problems by defining appropriate Schur-like decompositions; for instance, to the problem of finding matrices with at least a given number $k$ of eigenvalues inside a certain set $\Omega$.

In our future research, we plan to study extensions such as the one mentioned above, and to investigate further the behaviour of the method on larger matrices. Hopefully, convergence on larger matrices can be obtained by adjusting parameters in the optimization method, deriving preconditioners that approximate the Hessian of the objective function $f(Q) = \norm{\mathcal{L}(Q^\top AQ)}_F^2$, or including some techniques borrowed from traditional eigensolvers.

\section*{Acknowledgements}

We are grateful to two anonymous reviewers for their insightful comments.

\bibliographystyle{abbrv}
\bibliography{nearest}

\section*{Declarations}

{\bf Funding} VN acknowledges partial support by the Visiting Fellows Programme of the University of Pisa and support by an Academy of Finland grant (Suomen Akatemian p\"a\"at\"os 331240). FP acknowledges partial support by INdAM/GNCS and by a PRA (Progetto di Ricerca d'Ateneo) of the University of Pisa. \\
{\bf Conflicts of interest/Competing interests} Not applicable.\\
{\bf Availability of data and material} Not applicable.\\
{\bf Code availability} The code for the paper is freely available at \url{https://github.com/fph/nearest-omega-stable}.\\


\end{document}